\title{$\alpha$ degrees as an automorphism base for the $\alpha$-enumeration degrees}
  \author{\href{mailto:mmdt@leeds.ac.uk}{D\'avid Natingga}}
  \author{D\'avid Natingga}
\newcommand{\cki}{\omega^{CK}_1}
\newcommand{\aeop}[1]{\Phi_{#1}}
\begin{document}

\maketitle

\setcounter{secnumdepth}{2}
\setcounter{tocdepth}{2}



\subsubsection{\Large Abstract}
Selman's Theorem \cite{selman1971arithmetical} in classical Computability Theory gives a characterization of the enumeration reducibility for arbitrary sets in terms of the enumeration reducibility on the total sets:
$A \le_e B \iff \forall X [X \equiv_{e} X \oplus \overline{X} \land B \le_{e} X \oplus \overline{X} \implies A \le_{e} X \oplus. \overline{X} ]$.
This theorem implies directly that the Turing degrees are an automorphism base of the enumeration degrees.
We lift the classical proof to the setting of the $\alpha$-Computability Theory to obtain the full generalization when $\alpha$ is a regular cardinal and partial results for a general admissible ordinal $\alpha$.

\section{$\alpha$-Computability Theory}
$\alpha$-Computability Theory is the study of the definability theory over G\"odel's $L_\alpha$ where $\alpha$ is an admissible ordinal. One can think of equivalent definitions on Turing machines with a transfinite tape and time \cite{koepke2005turing} \cite{koepke2007alpha} \cite{koepke2009ordinal} \cite{koepke_seyfferth2009ordinal} or on generalized register machines \cite{koepke2008register}. Recommended references for this section are \cite{sacks1990higher}, \cite{chong1984techniques}, \cite{maass1978contributions} and \cite{di1983basic}.

Classical Computability Theory is $\alpha$-Computability Theory where $\alpha = \omega$.

\subsection{G\"odel's Constructible Universe}

\begin{defn}\label{defn_goedels_constructible_universe}(G\"odel's Constructible Universe)\\
Define \emph{G\"odel's constructible universe} as $L := \bigcup_{\beta \in \mathrm{Ord}} L_\beta$ where $\gamma, \delta \in \mathrm{Ord}$, $\delta$ is a limit ordinal and:

$L_0 := \emptyset$,

$L_{\gamma + 1} := \mathrm{Def}(L_\gamma):=\{x | x \subseteq L_\gamma $ and $x$ is first-order definable over $L_\gamma\}$,

$L_\delta = \bigcup_{\gamma < \delta} L_\gamma$.
\end{defn}

\subsection{Admissibility}

\begin{defn}(Admissible ordinal\cite{chong1984techniques})\\
An ordinal $\alpha$ is \emph{$\Sigma_1$ admissible} (admissible for short) iff $\alpha$ is a limit ordinal and $L_\alpha$ satisfies $\Sigma_1$-collection:
$\forall \phi(x,y) \in \Sigma_1(L_\alpha). L_\alpha \models \forall u [\forall x \in u \exists y. \phi(x,y) \implies \exists z \forall x \in u \exists y \in z. \phi(x,y)]$ where $L_\alpha$ is the $\alpha$-th level of the G\"odel's Constructible Hierarchy (\cref{defn_goedels_constructible_universe}).
\end{defn}

\begin{eg}(Examples of admissible ordinals \cite{chong1984techniques} \cite{takeuti1965recursive})
\begin{itemize}
\item $\cki$ - Church-Kleene $\omega_1$, the first non-computable ordinal
\item every stable ordinal $\alpha$ (i.e. $L_\alpha \prec_{\Sigma_1} L$), e.g. $\delta^1_2$ - the least ordinal which is not an order type of a $\Delta^1_2$ subset of $\mathbb{N}$, \nth{1} stable ordinal
\item every infinite cardinal in a transitive model of $\mathrm{ZF}$
\end{itemize}
\end{eg}

\subsection{Basic concepts}

\begin{defn}\label{defn_ez_fin}
A set $K \subseteq \alpha$ is \emph{$\alpha$-finite} iff $K \in L_\alpha$.
\end{defn}

\begin{defn}($\alpha$-computability and computable enumerability)
\begin{itemize}
\item A function $f:\alpha \to \alpha$ is \emph{$\alpha$-computable} iff $f$ is $\Sigma_1(L_\alpha)$ definable.
\item A set $A \subseteq \alpha$ is \emph{$\alpha$-computably enumerable} ($\alpha$-c.e.) iff $A \in \Sigma_1(L_\alpha)$.
\item A set $A \subseteq \alpha$ is \emph{$\alpha$-computable} iff $A \in \Delta_1(L_\alpha)$ iff $A \in \Sigma_1(L_\alpha)$ and $\alpha-A \in \Sigma_1(L_\alpha)$.
\end{itemize}
\end{defn}

\begin{prop}\label{prop_bij_alpha_to_l_alpha}\cite{chong1984techniques}
There exists a $\Sigma_1(L_\alpha)$-definable bijection $b:\alpha \to L_\alpha$.
\qed
\end{prop}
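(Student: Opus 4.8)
The plan is to realise $b$ as the order isomorphism from $(\alpha,<)$ onto $(L_\alpha,<_L)$, where $<_L$ is G\"odel's canonical well-ordering of the constructible universe: concretely, $b(\xi)$ is to be the unique $y\in L_\alpha$ such that $\{x\in L_\alpha:x<_L y\}$ has order type $\xi$. Two things must then be established — that this map is $\Sigma_1(L_\alpha)$-definable, and that its domain really is all of $\alpha$, i.e.\ that $(L_\alpha,<_L)$ has order type exactly $\alpha$.

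\emph{Definability.} First I would recall the background facts that the map $\gamma\mapsto L_\gamma$ is $\Delta_1(L_\alpha)$ and that the restriction of $<_L$ to $L_\alpha$ is $\Delta_1(L_\alpha)$: to decide $x<_L y$ one finds the least $\gamma$ with $x,y\in L_{\gamma+1}$, forms the ($\alpha$-finite) satisfaction relation for $L_\gamma$, and compares the least defining pairs (formula code, parameter tuple) of $x$ and of $y$ in the fixed well-ordering of such pairs over $L_\gamma$ — an $\alpha$-finite computation, so the relation and its complement are both $\Sigma_1$. Granting this, the clause ``$\{x:x<_L y\}$ has order type $\xi$'' is $\Sigma_1$: it asserts the existence of an $\alpha$-finite order isomorphism between the ($\alpha$-finite) set $\{x:x<_L y\}$, equipped with $<_L$, and $\xi$, together with an $\alpha$-finite witness that $y\in L_\alpha$. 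Hence the graph of $b$ is $\Sigma_1(L_\alpha)$.

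\emph{Order type $=\alpha$.} That $\mathrm{ot}(L_\alpha,<_L)\ge\alpha$ is immediate, since $<_L$ restricted to the ordinals coincides with $<$, so $(\alpha,<)$ embeds into $(L_\alpha,<_L)$. For the reverse inequality, fix $y\in L_\alpha$, say $y\in L_{\gamma+1}$; then $I_y:=\{x\in L_\alpha:x<_L y\}$ is a definable subset of $L_{\gamma+1}$, hence $\alpha$-finite, and the well-ordering it inherits from $<_L$ is $\alpha$-finite as well. By the standard fact that an admissible $L_\alpha$ computes the order type of every well-ordering lying in it — build the rank function by $\Sigma_1$-recursion, bound its range by $\Sigma_1$-collection — we get $\mathrm{ot}(I_y)<\alpha$. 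Since $(L_\alpha,<_L)$ has no largest element, $\mathrm{ot}(L_\alpha,<_L)=\sup_{y\in L_\alpha}\mathrm{ot}(I_y)\le\alpha$. Therefore $\mathrm{ot}(L_\alpha,<_L)=\alpha$ and $b$ is a $\Sigma_1(L_\alpha)$ bijection $\alpha\to L_\alpha$.

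I expect the main obstacle to be foundational bookkeeping rather than any conceptual difficulty: carefully pinning down that the restriction of $<_L$ to $L_\alpha$ (and the map $\gamma\mapsto L_\gamma$) is $\Delta_1(L_\alpha)$ by threading through the $\alpha$-finite satisfaction relations of the $L_\gamma$, and invoking the ``admissible sets compute order types'' lemma in precisely the $\Sigma_1$ form used above. A route avoiding the explicit order-type computation also works and could be given instead: manufacture a $\Sigma_1(L_\alpha)$ surjection $\pi:\alpha\to L_\alpha$ (each element of $L_\alpha$ is first-order definable over some $L_\gamma$ from an ordinal parameter, and $\omega\times\alpha$ is in $\Sigma_1$-bijection with $\alpha$ because admissible ordinals are closed under the requisite arithmetic), thin it to the injection given by $\pi$ restricted to $C:=\{\gamma:\pi(\gamma)\notin\pi[\gamma]\}$ — where $C$ is $\Delta_1$ by $\Sigma_1$-collection — and note that $\mathrm{ot}(C)=\alpha$, since otherwise the ordinals below $\alpha$ would be the surjective image of an $\alpha$-finite set, contradicting admissibility; composing with the order isomorphism $\alpha\cong C$ then yields $b$.
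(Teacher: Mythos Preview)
The paper gives no proof of this proposition: it is stated with a citation to Chong's monograph and closed with a bare \qed, so there is nothing to compare your argument against at the level of strategy. Your sketch is correct and is the standard argument one finds in the literature --- $b$ as the order isomorphism onto $(L_\alpha,<_L)$, with $\Delta_1$-ness of $<_L$ giving the definability and the ``admissibles compute order types of their elements'' lemma giving $\mathrm{ot}(L_\alpha,<_L)=\alpha$; the alternative surjection-then-thin route you outline is equally standard and also fine.
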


Let $K_\gamma$ denote an $\alpha$-finite set $b(\gamma)$. The next proposition establishes that we can also index pairs and other finite vectors from $\alpha^n$ by an index in $\alpha$.

\begin{prop}\label{prop_bij_to_n_fold_product}\cite{maass1978contributions}
For every $n$, there is a $\Sigma_1$-definable bijection $p_n$:$\alpha \to \alpha \times \alpha \times ... \times \alpha$ (n-fold product).
\qed
\end{prop}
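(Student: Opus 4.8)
The plan is to reduce to the case $n = 2$ and then build $p_2$ from G\"odel's pairing of ordinals. For the reduction, put $p_1 := \mathrm{id}_\alpha$ and, given $p_n : \alpha \to \alpha^n$, define $p_{n+1}(\gamma)$ by setting $(\xi,\eta) := p_2(\gamma)$, $(\delta_1,\dots,\delta_n) := p_n(\eta)$, and returning $(\xi,\delta_1,\dots,\delta_n)$. This is a bijection because $p_2$ and $p_n$ are, and it is $\Sigma_1(L_\alpha)$-definable since $\Sigma_1$ relations are closed under composition and finite tupling; so it suffices to produce a $\Sigma_1$-definable bijection $p_2 : \alpha \to \alpha \times \alpha$. (One could instead transport the bijection $b$ of \cref{prop_bij_alpha_to_l_alpha} to a $\Sigma_1$ \emph{injection} $\alpha \times \alpha \to \alpha$, but upgrading that to a $\Sigma_1$ bijection needs essentially the argument below anyway.)

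For $p_2$ I would use the G\"odel pairing well-order $\prec$ on $\mathrm{Ord} \times \mathrm{Ord}$: declare $(\xi_1,\eta_1) \prec (\xi_2,\eta_2)$ iff $\max(\xi_1,\eta_1) < \max(\xi_2,\eta_2)$, or $\max(\xi_1,\eta_1) = \max(\xi_2,\eta_2)$ and $(\xi_1,\eta_1)$ lexicographically precedes $(\xi_2,\eta_2)$. Any $\prec$-predecessor of $(\xi,\eta)$ lies in $(\mu+1) \times (\mu+1)$ with $\mu = \max(\xi,\eta)$, so $\prec$ is a set-like well-order and, moreover, $\alpha \times \alpha$ is a $\prec$-initial segment of $\mathrm{Ord} \times \mathrm{Ord}$. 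Hence the transitive collapse $\Gamma$ of $(\alpha \times \alpha, \prec)$ is a restriction of the global collapse, is order-preserving, and has an ordinal for image.

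The crux is that $\Gamma[\alpha \times \alpha] = \alpha$. For $\subseteq$: given $\xi,\eta < \alpha$, the initial segment $\{(\xi',\eta') : (\xi',\eta') \prec (\xi,\eta)\}$ is contained in $(\mu+1) \times (\mu+1)$ for $\mu = \max(\xi,\eta) < \alpha$, so it lies in $L_\alpha$ ($\alpha$ being a limit ordinal) and is thus $\alpha$-finite; by the standard consequence of $\Sigma_1$-collection that every $\alpha$-finite well-order has order type strictly below $\alpha$, we conclude $\Gamma(\xi,\eta) < \alpha$. For $\supseteq$: one checks $\Gamma(\mu,0) \ge \mu$ for each $\mu < \alpha$ (the $\mu$ pairs $(\nu,\mu)$ with $\nu < \mu$ all $\prec$-precede $(\mu,0)$), so the image, being an ordinal, is unbounded in $\alpha$ and hence equals $\alpha$.

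Finally, definability: the graph of the restriction of $\Gamma$ to $\alpha \times \alpha$ is $\Sigma_1(L_\alpha)$, since $\Gamma(\xi,\eta) = \zeta$ holds iff there is $f \in L_\alpha$ that is the order-isomorphism of $(\{(\xi',\eta') : (\xi',\eta') \preceq (\xi,\eta)\}, \prec)$ onto an ordinal with $f(\xi,\eta) = \zeta$ — the displayed set is $\Delta_0$ in $\xi,\eta$, and the witnessing collapse really does lie in $L_\alpha$ because it is definable over some $L_\delta$ with $\delta < \alpha$ (its range is the ordinal $\zeta+1 < \alpha$ by the previous paragraph). Swapping coordinates yields a $\Sigma_1$ graph of the total map $\Gamma^{-1} : \alpha \to \alpha \times \alpha$, so set $p_2 := \Gamma^{-1}$. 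I expect the inclusion $\Gamma[\alpha \times \alpha] \subseteq \alpha$ to be the main obstacle: it is exactly the bound on order types of $\alpha$-finite well-orders, the one step that genuinely uses admissibility and that fails for a non-admissible limit $\alpha$.
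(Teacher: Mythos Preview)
The paper does not actually prove this proposition: it is stated with a citation to Maass and closed immediately with \qed, so there is no argument to compare against. Your proposal is correct and is precisely the standard proof one finds in the cited source: reduce to $n=2$, use G\"odel's pairing well-order on $\alpha\times\alpha$, and invoke admissibility to bound the order type of each $\alpha$-finite initial segment below $\alpha$, which gives both surjectivity onto $\alpha$ and the $\Sigma_1$ witness for the collapse.
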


Similarly, we can index $\alpha$-c.e., $\alpha$-computable sets by an index in $\alpha$.
Let $W_e$ denote an $\alpha$-c.e. set with an index $e < \alpha$.

\begin{prop}\label{prop_alpha_finite_union}($\alpha$-finite union of $\alpha$-finite sets\footnote{From \cite{sacks1990higher} p162.})\\
$\alpha$-finite union of $\alpha$-finite sets is $\alpha$-finite, i.e. if $K \in L_\gamma$, then $\bigcup_{\gamma \in K} K_\gamma \in L_\alpha$.
\qed
\end{prop}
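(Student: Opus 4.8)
The plan is to deduce the statement from $\Sigma_1$-admissibility of $L_\alpha$, in two steps: first, that $\{K_\gamma : \gamma \in K\}$ is itself $\alpha$-finite (an instance of the ``$\Sigma_1$-replacement'' principle, that the $\Sigma_1$-definable image of an $\alpha$-finite set is $\alpha$-finite), and second, that $L_\alpha$ is closed under the union operation.

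First I would observe that, by \cref{prop_bij_alpha_to_l_alpha}, the assignment $\gamma \mapsto K_\gamma = b(\gamma)$ has a $\Sigma_1(L_\alpha)$ graph, so its restriction to the $\alpha$-finite set $K$ is a \emph{total} $\Sigma_1$-definable function $h \colon K \to L_\alpha$. Writing $\psi(\gamma, y)$ for the $\Sigma_1$ formula ``$y = h(\gamma)$'', totality of $h$ on $K$ gives $L_\alpha \models \forall \gamma \in K\, \exists y\, \psi(\gamma, y)$, so $\Sigma_1$-collection supplies an $\alpha$-finite $z$ with $\forall \gamma \in K\, \exists y \in z\, \psi(\gamma, y)$, i.e. $\{K_\gamma : \gamma \in K\} \subseteq z$. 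Again because $h$ is total on $K$, the predicate ``$y = h(\gamma)$'' is in fact $\Delta_1$ on $K$: its negation is equivalent to $\exists y'\,[\,y' = h(\gamma) \wedge y' \ne y\,]$, which is $\Sigma_1$. Since over an admissible ordinal bounded quantification preserves $\Delta_1$-ness, ``$\exists \gamma \in K\,(y = h(\gamma))$'' is $\Delta_1$, and therefore $\mathcal{F} := \{K_\gamma : \gamma \in K\} = \{\, y \in z : \exists \gamma \in K\,(y = h(\gamma))\,\}$ is cut out of the $\alpha$-finite set $z$ by $\Delta_1$-separation, which holds in any admissible $L_\alpha$ (it follows from $\Sigma_1$-collection together with the $\Delta_0$-separation built into admissibility). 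Hence $\mathcal{F} \in L_\alpha$. Finally $\bigcup_{\gamma \in K} K_\gamma = \bigcup \mathcal{F}$, and $\bigcup \mathcal{F}$ is $\alpha$-finite: if $\beta < \alpha$ is such that $\mathcal{F} \in L_\beta$, then transitivity of $L_\beta$ gives $\bigcup \mathcal{F} \subseteq L_\beta$, and $\bigcup \mathcal{F}$ is $\Delta_0$-definable over $L_\beta$, so $\bigcup \mathcal{F} \in L_{\beta + 1} \subseteq L_\alpha$.

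The one genuinely delicate point is that $\Sigma_1$-separation fails in a general admissible set, so one cannot separate $\mathcal{F}$ out of $z$ directly using the $\Sigma_1$ graph of $h$; the remedy is to exploit totality of $h$ on $K$ to bring the defining predicate down to $\Delta_1$ and then apply $\Delta_1$-separation. Everything else — the application of $\Sigma_1$-collection and the closure of $L_\alpha$ under pairing and union — is routine, and none of it requires $\alpha$ to be anything more than admissible.
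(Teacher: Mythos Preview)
Your argument is correct: you invoke $\Sigma_1$-collection to bound the image $\{K_\gamma : \gamma \in K\}$ inside some $z \in L_\alpha$, use totality of $\gamma \mapsto K_\gamma$ on $K$ to make the defining predicate $\Delta_1$, apply $\Delta_1$-separation (which is derivable in KP from $\Delta_0$-separation plus $\Sigma_1$-collection, exactly as you sketch), and then close under $\bigcup$ via first-order definability over a level $L_\beta$. Each step is sound, and your remark that $\Sigma_1$-separation is \emph{not} generally available, so one must pass to $\Delta_1$, is the right diagnosis of the only subtle point.

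There is nothing to compare against: the paper does not supply its own proof of this proposition, but simply cites Sacks and writes \verb|\qed|. Your write-up is therefore strictly more informative than what the paper contains. If anything, one could shorten the final step by observing that KP proves the Union axiom outright, so once $\mathcal{F} \in L_\alpha$ is established, $\bigcup \mathcal{F} \in L_\alpha$ is immediate; your explicit argument via $L_{\beta+1}$ is fine but not needed.
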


\subsection{Enumeration reducibility}
The generalization of the enumeration reducibility corresponds to two different notions - weak $\alpha$-enumeration reducibility and $\alpha$-enumeration reducibility.

\begin{defn}(Weak $\alpha$-enumeration reducibility)\\
$A$ is weakly $\alpha$-enumeration reducible to $B$ denoted as $A \le_{w\alpha e} B$ iff $\exists \Phi \in \Sigma_1(L_\alpha)$ st
$\Phi(B) = \{x < \alpha : \exists \delta < \alpha [ \langle x, \delta \rangle \in \Phi \land K_\delta \subseteq B]\}$.
The set $\Phi$ is called a weak $\alpha$-enumeration operator.
\end{defn}

\begin{defn}\label{defn_ae_reducibility}($\alpha$-enumeration reducibility)\\
$A$ is $\alpha$-enumeration reducible to $B$ denoted as $A \le_{\alpha e} B$ iff $\exists W \in \Sigma_1(L_\alpha)$ st
$\forall \gamma < \alpha [ K_\gamma \subseteq A \iff \exists \delta < \alpha [ \langle \gamma, \delta \rangle \in W \land K_\delta \subseteq B]]$.

Denote the fact that $A$ reduces to $B$ via $W$ as $A=W(B)$.
\end{defn}

\begin{fact}(Transitivity)\\
The $\alpha$-enumeration reducibility $\le_{\alpha e}$ is transitive.
But in general the weak $\alpha$-enumeration reducibility is not transitive.
\end{fact}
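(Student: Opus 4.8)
The statement splits into two halves that need very different treatment. The transitivity of $\le_{\alpha e}$ is a routine ``compose the operators'' argument; the failure of transitivity of $\le_{w\alpha e}$ is the real content, and I would establish it by a counterexample whose existence rests on the fact that $\Sigma_1$-collection need not survive relativisation to an arbitrary oracle $C \subseteq \alpha$.

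\smallskip
\noindent\emph{Transitivity of $\le_{\alpha e}$.} Suppose $A = W_1(B)$ and $B = W_2(C)$ with $W_1, W_2 \in \Sigma_1(L_\alpha)$ as in \cref{defn_ae_reducibility}. Put
\[
  W \;:=\; \{\, \langle \gamma, \zeta\rangle \;:\; \exists \delta < \alpha \;\bigl(\langle\gamma,\delta\rangle \in W_1 \,\wedge\, \langle\delta,\zeta\rangle \in W_2\bigr)\,\}.
\]
Closure of $\Sigma_1(L_\alpha)$ under conjunction and under the unbounded existential number quantifier, together with the $\Sigma_1$ pairing of \cref{prop_bij_to_n_fold_product}, gives $W \in \Sigma_1(L_\alpha)$. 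It then suffices to check, for each $\gamma < \alpha$, that $K_\gamma \subseteq A \iff \exists\zeta<\alpha\,(\langle\gamma,\zeta\rangle\in W \wedge K_\zeta \subseteq C)$. Forward: from $K_\gamma \subseteq A$ and $A=W_1(B)$ take $\delta$ with $\langle\gamma,\delta\rangle\in W_1$ and $K_\delta\subseteq B$; now apply the defining biconditional of $W_2$ to the $\alpha$-finite set $K_\delta$ itself to get $\zeta$ with $\langle\delta,\zeta\rangle\in W_2$ and $K_\zeta\subseteq C$; then $\langle\gamma,\zeta\rangle\in W$ and $K_\zeta\subseteq C$. Backward: run the same two biconditionals in reverse. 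What makes this work is precisely that \cref{defn_ae_reducibility} quantifies over \emph{all} $\alpha$-finite $K_\gamma$, so the intermediate datum $K_\delta\subseteq B$ is passed to $W_2$ as a single index $\delta$ and returns a single index $\zeta$ — no amalgamation over the members of $K_\delta$ is required.

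\smallskip
\noindent\emph{Non-transitivity of $\le_{w\alpha e}$.} First I would point out where the same recipe breaks for weak operators: given $A=\Phi_1(B)$ and $B=\Phi_2(C)$, certifying $x\in A$ from $C$ demands $\delta$ with $\langle x,\delta\rangle\in\Phi_1$ and then a certificate that $K_\delta\subseteq B$; but a weak operator enumerates $B$ only element by element, so one needs, for every $y\in K_\delta$, an index $\zeta_y$ with $\langle y,\zeta_y\rangle\in\Phi_2$ and $K_{\zeta_y}\subseteq C$. To package these into one index one wants an $\alpha$-finite choice $y\mapsto\zeta_y$ (then $\bigcup_y K_{\zeta_y}$ is $\alpha$-finite by \cref{prop_alpha_finite_union}); but ``$\langle y,\zeta\rangle\in\Phi_2 \wedge K_\zeta\subseteq C$'' is merely $\Delta_0$ in the oracle $C$, not $\Sigma_1(L_\alpha)$, so $\Sigma_1$-collection provides no bound and $y\mapsto\zeta_y$ may be cofinal in $\alpha$ with no $\alpha$-finite selector. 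The plan is to realise this obstruction: for a suitable non-regular admissible $\alpha$ (one carrying an oracle-definable map of a bounded set cofinally into $\alpha$, as happens when relativised $\Sigma_1$-bounding fails) construct $C\subseteq\alpha$, a set $B$ with $B\le_{w\alpha e}C$, and a set $A$ with $A\le_{w\alpha e}B$ via a single ``$K_\delta$'' step, arranged by diagonalisation so that every $\Sigma_1(L_\alpha)$ weak operator $\Phi$ fails $\Phi(C)=A$ exactly because the witnesses it would have to combine are unbounded; alternatively one may cite the known counterexample from the $\alpha$-recursion literature.

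\smallskip
The transitivity half aside, the main obstacle is this last construction: one must keep both $A\le_{w\alpha e}B$ and $B\le_{w\alpha e}C$ genuine while destroying $A\le_{w\alpha e}C$, and the tension is that the very feature forcing the required witnesses to be unbounded tends also to break the two reductions one is preserving. This is why the statement says ``in general'': for $\alpha=\omega$, and more generally whenever the relevant collection principle does relativise (e.g. for $\alpha$ a regular cardinal, the case singled out in the abstract), $\le_{w\alpha e}$ and $\le_{\alpha e}$ agree and both are transitive, so the counterexample is necessarily tailored to the admissible ordinals where relativised $\Sigma_1$-collection fails.
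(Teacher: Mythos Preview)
The paper states this as a bare \emph{Fact} with no proof attached, so there is nothing to compare your proposal against directly. Your argument for the transitivity of $\le_{\alpha e}$ is correct and is the natural composition argument; the key point you isolate --- that \cref{defn_ae_reducibility} gives the biconditional at the level of an arbitrary $\alpha$-finite $K_\delta$, so the intermediate witness passes to $W_2$ as a single index with no amalgamation needed --- is exactly what makes transitivity succeed here and fail for weak operators.

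For the non-transitivity half you give a correct diagnosis of the obstruction (the element-by-element witnesses $\zeta_y$ are governed only by a $\Delta_0(C)$ condition, so $\Sigma_1$-collection in $L_\alpha$ gives no bound), and your remark that the counterexample must live at an admissible $\alpha$ where relativised collection fails --- in particular not at $\omega$ or a regular cardinal, where by \cref{prop_correspondence_ae_wae_sigma_1_def} the two reducibilities coincide --- is accurate. What you offer, however, is a plan rather than a construction: you do not actually exhibit $A,B,C$ and an admissible $\alpha$ witnessing the failure, and you acknowledge that this construction is the main obstacle. Since the paper likewise offers nothing beyond the assertion, your proposal already goes further than the text; to make it a complete proof you would need either to carry out the diagonalisation you sketch at a specific $\alpha$ admitting a non-megaregular set, or to cite a concrete source in the $\alpha$-recursion literature (e.g.\ Chong or Sacks) where such an example is built.
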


\subsection{Properties of $\alpha$-enumeration operator}
\begin{fact}\label{lemma_enum_from_aeop}
If $A \subseteq \alpha$, then $\aeop{e}(A) \le_{w\alpha e} A$.
\end{fact}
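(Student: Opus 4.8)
The plan is to exhibit an explicit weak $\alpha$-enumeration operator $\Psi$ with $\Psi(A) = \aeop{e}(A)$, obtained from the $\Sigma_1(L_\alpha)$ set $W_e$ underlying the operator $\aeop{e}$ by restricting its left coordinate to codes of singletons. Passing from the ``strong'' operator $W_e$ (which is required to respect \emph{all} $\alpha$-finite subsets of the output) to a ``weak'' operator (which only needs to enumerate the output pointwise) should be immediate once we can name, $\alpha$-computably, an index for each singleton $\{x\}$.

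First I would use the $\Sigma_1(L_\alpha)$-definable bijection $b : \alpha \to L_\alpha$ of \cref{prop_bij_alpha_to_l_alpha} to fix an $\alpha$-computable function $s : \alpha \to \alpha$ with $K_{s(x)} = \{x\}$ for every $x < \alpha$: since $b$ is a $\Sigma_1$-definable bijection its inverse has $\Sigma_1$ graph, so $s : x \mapsto b^{-1}(\{x\})$ is $\Sigma_1(L_\alpha)$, i.e.\ $\alpha$-computable. Writing $C := \aeop{e}(A) = W_e(A)$ and instantiating \cref{defn_ae_reducibility} at the singleton code $\gamma = s(x)$ (so that $K_{s(x)} = \{x\} \subseteq C \iff x \in C$) gives, for every $x < \alpha$, the equivalence
$$ x \in C \iff \exists \delta < \alpha\,[\,\langle s(x), \delta\rangle \in W_e \ \land\ K_\delta \subseteq A\,]. $$

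Now set $\Psi := \{\langle x, \delta\rangle : \langle s(x), \delta\rangle \in W_e\}$. Since $W_e \in \Sigma_1(L_\alpha)$ and $s$ is $\alpha$-computable, $\Psi \in \Sigma_1(L_\alpha)$, so $\Psi$ is a weak $\alpha$-enumeration operator. Unwinding the definition of $\Psi(A)$ and applying the displayed equivalence yields $\Psi(A) = \{x < \alpha : \exists \delta\,[\langle x, \delta\rangle \in \Psi \land K_\delta \subseteq A]\} = \{x < \alpha : \exists \delta\,[\langle s(x), \delta\rangle \in W_e \land K_\delta \subseteq A]\} = C = \aeop{e}(A)$, which is precisely the assertion $\aeop{e}(A) \le_{w\alpha e} A$.

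The argument is essentially a bookkeeping exercise in unwinding the two reducibility definitions; the one genuine (though minor) point to verify carefully is that $s$ is $\alpha$-computable, i.e.\ that passing from $x < \alpha$ to a canonical index for $\{x\}$ under the fixed coding $b$ is $\Sigma_1(L_\alpha)$. I expect no further obstacle. The same construction shows more generally that $A \le_{\alpha e} B \implies A \le_{w\alpha e} B$, and \cref{lemma_enum_from_aeop} is the special case $B = A$ applied to the $\alpha$-enumeration reduction $\aeop{e}(A) \le_{\alpha e} A$ witnessed by $W_e$ itself.
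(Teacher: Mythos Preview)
There is a genuine gap. You assume that $\Phi_e(A) = W_e(A)$ in the sense of \cref{defn_ae_reducibility}, i.e.\ that the full biconditional
\[
K_\gamma \subseteq \Phi_e(A) \iff \exists \delta\,[\langle \gamma,\delta\rangle \in W_e \land K_\delta \subseteq A]
\]
holds for every $\gamma$. But the paper's definition of $\Phi_e(A)$ (displayed in the proof of \cref{prop_enum_op_witness}) is simply the union $\bigcup\{K_\gamma : \exists \delta\,[\langle \gamma,\delta\rangle \in W_e \land K_\delta \subseteq A]\}$, which gives only the $\Leftarrow$ direction. The $\Rightarrow$ direction fails for an arbitrary $\Sigma_1$ set $W_e$: nothing forces $W_e$ to list every $\alpha$-finite subset of its output. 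Concretely, if $W_e$ consists of a single pair $\langle \gamma_0,\delta_0\rangle$ with $K_{\gamma_0}=\{0,1\}$ and $K_{\delta_0}=\emptyset$, then $\Phi_e(A)=\{0,1\}$ for every $A$, yet $\langle s(0),\delta\rangle\notin W_e$ for any $\delta$, so your $\Psi=\{\langle x,\delta\rangle:\langle s(x),\delta\rangle\in W_e\}$ is empty and $\Psi(A)=\emptyset\neq\Phi_e(A)$. In particular the final remark that $\Phi_e(A)\le_{\alpha e} A$ is witnessed by $W_e$ itself is not correct.

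The fix is easy: instead of restricting to singleton output codes, unravel the union directly. Take
\[
\Psi := \{\langle x,\delta\rangle : \exists \gamma < \alpha\,[\,x \in K_\gamma \ \land\ \langle \gamma,\delta\rangle \in W_e\,]\},
\]
which is $\Sigma_1(L_\alpha)$ since $W_e$ is and membership $x\in K_\gamma$ is $\Delta_1$. Then $\Psi(A)=\{x:\exists\gamma,\delta\,[x\in K_\gamma \land \langle\gamma,\delta\rangle\in W_e \land K_\delta\subseteq A]\}=\bigcup\{K_\gamma:\exists\delta\,[\langle\gamma,\delta\rangle\in W_e\land K_\delta\subseteq A]\}=\Phi_e(A)$, as required. (The paper states the result as a Fact without proof, so there is nothing further to compare against.)
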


\begin{fact}\label{prop_enum_op_monotonicity}(Monotonicity)\\
$\forall e < \alpha \forall A, B \subseteq \alpha [A \subseteq B \implies \aeop{e}(A) \subseteq \aeop{e}(B)]$.
\qed
\end{fact}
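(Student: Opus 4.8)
The plan is simply to unfold the definition of the $\alpha$-enumeration operator applied to a set. Recall that $\aeop{e}(A) = \{x < \alpha : \exists \delta < \alpha\,[\langle x, \delta\rangle \in \aeop{e} \land K_\delta \subseteq A]\}$, where $\aeop{e}$ is the $e$-th $\Sigma_1(L_\alpha)$ set read as a weak $\alpha$-enumeration operator (cf.\ \cref{lemma_enum_from_aeop}). The key observation is that the argument set $A$ enters this definition only \emph{positively}, through the clause $K_\delta \subseteq A$, and such a clause is preserved when $A$ is replaced by a superset.

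Concretely, I would fix $e < \alpha$ and sets $A \subseteq B \subseteq \alpha$, and take an arbitrary $x \in \aeop{e}(A)$. By the displayed characterization there is a witness $\delta < \alpha$ with $\langle x, \delta\rangle \in \aeop{e}$ and $K_\delta \subseteq A$. Since $A \subseteq B$, the very same $\delta$ satisfies $K_\delta \subseteq B$, so $x \in \aeop{e}(B)$. As $x$ was arbitrary, $\aeop{e}(A) \subseteq \aeop{e}(B)$, which is the claim.

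There is essentially no obstacle here: the only point that needs checking is that the enumeration operator is positive in its argument, i.e.\ that $A$ never occurs under a negation or complementation in the defining matrix — and this is immediate from the definitions of weak $\alpha$-enumeration reducibility and of $\aeop{e}$ given above, where $A$ appears solely inside subset conditions $K_\delta \subseteq A$. The same one-line argument would also establish monotonicity for $\le_{w\alpha e}$-operators in general.
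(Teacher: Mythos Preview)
Your argument is correct; the paper records this as a \emph{Fact} with no proof (the trailing \qed\ signals it is left to the reader), and your unfolding of the definition is precisely the intended one-line verification. One cosmetic remark: the paper's working form of $\aeop{e}(A)$, visible in the proof of \cref{prop_enum_op_witness}, is $\bigcup\{K_\gamma : \exists \delta<\alpha\,[\langle\gamma,\delta\rangle \in W_e \land K_\delta \subseteq A]\}$ rather than the weak-operator form $\{x : \exists\delta\,[\langle x,\delta\rangle\in\aeop{e}\land K_\delta\subseteq A]\}$ you quote, but this makes no difference to the argument, since in either formulation the input $A$ occurs only through the positive clause $K_\delta \subseteq A$.
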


\begin{prop}\label{prop_enum_op_witness}(Witness property)\\
If $x \in \aeop{e}(A)$, then $\exists K \subseteq A [K \in L_\alpha \land x \in \aeop{e}(K)]$.
\end{prop}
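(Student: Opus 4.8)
The plan is simply to unwind the definition of the $\alpha$-enumeration operator and read the witnessing $\alpha$-finite set straight off a successful computation; this is the $\alpha$-analogue of the classical compactness of enumeration operators. Recall that, identifying $\aeop{e}$ with the $\Sigma_1(L_\alpha)$ set of pairs defining it, one has for every $A \subseteq \alpha$ that $\aeop{e}(A) = \{x < \alpha : \exists \delta < \alpha\,[\langle x, \delta\rangle \in \aeop{e} \land K_\delta \subseteq A]\}$, where $\langle\cdot,\cdot\rangle$ is the $\Sigma_1$-definable pairing of \cref{prop_bij_to_n_fold_product} and $K_\delta = b(\delta)$ comes from \cref{prop_bij_alpha_to_l_alpha}.

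So suppose $x \in \aeop{e}(A)$. The first step is to fix, by the displayed description of $\aeop{e}(A)$, an ordinal $\delta < \alpha$ with $\langle x,\delta\rangle \in \aeop{e}$ and $K_\delta \subseteq A$. The second step is to take the witness to be $K := K_\delta$ and check the three required clauses: (i) $K \subseteq A$ holds by the choice of $\delta$; (ii) $K \in L_\alpha$, i.e.\ $K$ is $\alpha$-finite in the sense of \cref{defn_ez_fin}, holds because $K_\delta = b(\delta) \in L_\alpha$ by \cref{prop_bij_alpha_to_l_alpha}; and (iii) $x \in \aeop{e}(K)$ holds because the existential quantifier in the definition of $\aeop{e}(K)$ is again witnessed by the same $\delta$ — indeed $\langle x,\delta\rangle \in \aeop{e}$, and $K_\delta \subseteq K$ trivially since $K = K_\delta$. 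Together these give $\exists K \subseteq A\,[K \in L_\alpha \land x \in \aeop{e}(K)]$, as desired.

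I do not expect a genuine obstacle: the statement holds essentially by definition, the content being merely that a positive instance $x \in \aeop{e}(A)$ already carries with it a single $\alpha$-finite ``use'' $K_\delta$. The only care needed is bookkeeping — making sure the indexing conventions ($K_\gamma$ via the $\Sigma_1$ bijection $b$, pairs via \cref{prop_bij_to_n_fold_product}) are exactly those fixed above, and invoking \cref{defn_ez_fin} to read $K_\delta \in L_\alpha$ as $\alpha$-finiteness. One might be tempted to deduce clause (iii) from \cref{prop_enum_op_monotonicity}, but monotonicity only yields $\aeop{e}(K_\delta) \subseteq \aeop{e}(A)$, the wrong direction; the direct verification re-using the witness $\delta$ is the correct move.
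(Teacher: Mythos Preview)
Your proof is correct and follows essentially the same route as the paper: unwind the definition of $\aeop{e}(A)$, extract the witnessing index $\delta$, and set $K := K_\delta$. The only cosmetic difference is that the paper phrases the operator in its ``strong'' form $\aeop{e}(A) = \bigcup\{K_\gamma : \exists \delta\,[\langle\gamma,\delta\rangle \in W_e \land K_\delta \subseteq A]\}$ (so one first picks $\gamma$ with $x \in K_\gamma$, then the associated $\delta$), whereas you use the ``weak'' elementwise form; in either presentation the witness $K_\delta$ is read off in the same way.
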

\begin{proof}
Note $\aeop{e}(A) := \bigcup \{K_\gamma : \exists \delta < \alpha [\langle \gamma, \delta \rangle \in W_e \land K_\delta \subseteq A\}$.
Thus if $x \in \aeop{e}(A)$, then $\exists \gamma < \alpha$ st $x \in K_\gamma$ and so $\exists \delta < \alpha [\langle \gamma, \delta \rangle \in W_e \land K_\delta \subseteq A]$.
Taking $K$ to be $K_\delta$ concludes the proof.
\end{proof}

\subsection{Totality}
\begin{defn}\footnote{From \cite{chong1984techniques} p8.}
The \emph{computable join} of sets $A, B \subseteq \alpha$ denoted $A \oplus B$ is defined to be

$A \oplus B := \{2a : a \in A\} \cup \{2b+1 : b \in B\}$.
\end{defn}

The computable join satisfies the usual properties of the case $\alpha=\omega$.

The generalization of the Turing reducibility corresponds to two different notions - weak $\alpha$ reducibility and $\alpha$ reducibility.

\begin{defn}(Total reducibilities)
\begin{itemize}
\item $A$ is $\alpha$-reducible to $B$ denoted as $A \le_{\alpha} B$ iff $A \oplus \overline{A} \le_{\alpha e} B \oplus \overline{B}$.
\item $A$ is weakly $\alpha$-reducible to $B$ denoted as $A \le_{w\alpha} B$ iff $A \oplus \overline{A} \le_{w\alpha e} B \oplus \overline{B}$.
\end{itemize}
\end{defn}

\begin{defn}(Total set)\\
A subset $A \subseteq \alpha$ is total iff $A \le_{\alpha e} \overline{A}$ iff $A \equiv_{\alpha e} A \oplus \overline{A}$.
\end{defn}

\subsection{Degree Theory}

\begin{defn}(Degrees)
\begin{itemize}
\item $\mathcal{D}_\alpha := \powerset{\alpha}/\equiv_\alpha$ is a set of \emph{$\alpha$-degrees}.
\item $\mathcal{D}_{\alpha e} := \powerset{\alpha}/\equiv_{\alpha e}$ is a set of \emph{$\alpha$-enumeration} degrees.
\end{itemize}
Induce $\le$ on $\mathcal{D}_\alpha$ and $\mathcal{D}_{\alpha e}$ by $\le_\alpha$ and $\le_{\alpha e}$ respectively.
\end{defn}

\begin{fact}(Embedding of the total degrees)\\
$\langle \mathcal{D}_\alpha, \le \rangle$ embeds into $\langle \mathcal{D}_{\alpha e}, \le \rangle$ via $\iota:\mathcal{D}_\alpha \hookrightarrow \mathcal{D}_{\alpha e}$, $A \mapsto A \oplus \overline{A}$.
\end{fact}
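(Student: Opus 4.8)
The plan is to verify directly from the definitions that $\iota$ is a well-defined, injective, order-preserving and order-reflecting map between the two partial orders; the whole content is an unwinding of the definition of $\le_{\alpha}$, so no step is a genuine obstacle.

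First I would make $\iota$ precise. A point of $\mathcal{D}_{\alpha}$ is a class $\adeg{A}$ for some $A \subseteq \alpha$, and one sets $\iota(\adeg{A}) := \aedeg{A \oplus \overline{A}}$. As a preliminary sanity check, recall that $\le_{\alpha e}$ is reflexive (witnessed by the diagonal operator $W = \{\langle \gamma, \gamma \rangle : \gamma < \alpha\} \in \Sigma_1(L_\alpha)$) and transitive (Transitivity Fact), and that $\le_{\alpha}$ inherits both properties via the translation $X \mapsto X \oplus \overline{X}$ together with the standard properties of the computable join; hence $\mathcal{D}_{\alpha}$ and $\mathcal{D}_{\alpha e}$ are genuine partial orders under the induced $\le$, and the statement is meaningful.

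Now the core. By the definition of $\le_{\alpha}$ we have $A \le_{\alpha} B \iff A \oplus \overline{A} \le_{\alpha e} B \oplus \overline{B}$, and the right-hand side is exactly $\iota(\adeg{A}) \le \iota(\adeg{B})$; in particular $A \equiv_{\alpha} B \iff A \oplus \overline{A} \aeeq B \oplus \overline{B}$. Reading the last biconditional from left to right shows that $\iota(\adeg{A})$ is independent of the representative $A$, so $\iota$ is well defined; and the chain $\adeg{A} \le \adeg{B} \iff A \le_{\alpha} B \iff \iota(\adeg{A}) \le \iota(\adeg{B})$ shows that $\iota$ both preserves and reflects $\le$. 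By antisymmetry of $\le$ on $\mathcal{D}_{\alpha e}$ this already forces $\iota$ to be injective, so $\iota$ is an order embedding, as claimed. I would close with the remark that the embedding need not be onto, since non-total sets exist (any $A$ with $A \not\le_{\alpha e} \overline{A}$ yields an $\alpha e$-degree outside the range of $\iota$), though that goes beyond the stated Fact.
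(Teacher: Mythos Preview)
Your proposal is correct: the Fact is stated in the paper without proof, precisely because it is an immediate unwinding of the definition $A \le_{\alpha} B :\iff A \oplus \overline{A} \le_{\alpha e} B \oplus \overline{B}$, which is exactly the argument you give. Your verification of well-definedness, order preservation, order reflection, and injectivity is the standard one and matches the paper's implicit reasoning.
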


\begin{defn}(Total degrees)\\
Let $\iota:\mathcal{D}_\alpha \hookrightarrow \mathcal{D}_{\alpha e}$ be the embedding from above.
The total $\alpha$-enumeration degrees $\mathcal{TOT}_{\alpha e}$ are the image of $\iota$, i.e. $\mathcal{TOT}_{\alpha e} := \iota[\mathcal{D}_\alpha]$.
\end{defn}

\subsection{Megaregularity}
Megaregularity of a set $A$ measures the amount of the admissibility of a structure structure $\langle L_\alpha, A \rangle$, i.e. a structure extended by a predicate with an access to $A$.

\begin{note}(Formula with a positive/negative parameter)
\begin{itemize}
\item Let $B$ denote a set, $B^+$ its enumeration, $B^-$ the enumeration of its complement $\overline{B}$.
\item Denote by $\Sigma_1(L_\alpha, B)$ the class of $\Sigma_1$ formulas with a parameter $B$ or in $L_\alpha$.
\item A $\Sigma_1(L_\alpha, B)$ formula $\phi(\overline{x}, B)$ is $\Sigma_1(L_\alpha, B^+)$ iff $B$ occurs in $\phi(\overline{x}, B)$ only positively, i.e. there is no negation up the formula tree above the literal $x \in B$.
\item Similarly, a $\Sigma_1(L_\alpha, B)$ formula $\phi(\overline{x}, B)$ is $\Sigma_1(L_\alpha, B^-)$ iff $B$ occurs in $\phi(\overline{x}, B)$ only negatively.
\end{itemize}
\end{note}

\begin{defn}(Megaregularity)\\
Let $\mathcal{B} \in \{B, B^-, B^+\}$ and add $B$ as a predicate to the language for the structure $\langle L_\alpha, \mathcal{B} \rangle$.
\begin{itemize}
\item Then $\mathcal{B}$ is $\alpha$-\emph{megaregular} iff $\alpha$ is $\Sigma_1(L_\alpha, \mathcal{B})$ admissible iff the structure $\langle L_\alpha, \mathcal{B} \rangle$ is admissible,
i.e. every $\Sigma_1(L_\alpha, \mathcal{B})$ definable function satisfies the replacement axiom:
$\forall f \in \Sigma_1(L_\alpha, \mathcal{B}) \forall K \in L_\alpha. f[K] \in L_\alpha$.
\item $B$ is \emph{positively $\alpha$-megaregular} iff $B^+$ is $\alpha$-megaregular.
\item $B$ is \emph{negatively $\alpha$-megaregular} iff $B^-$ is $\alpha$-megaregular.
\end{itemize}
\end{defn}

If clear from the context, we just say \emph{megaregular} instead of $\alpha$-megaregular.

A person familiar with the notion of hyperregularity shall note that a set is
megaregular iff it is regular and hyperregular.

\begin{fact}(Megaregularity and definability)
\begin{itemize}
\item $B \in \Sigma_1(L_\alpha) \implies B^+$ is megaregular,
\item $B \in \Delta_1(L_\alpha) \implies B$ is megaregular.
\end{itemize}
\end{fact}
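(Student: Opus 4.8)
The plan is to show that passing from $L_\alpha$ to the expanded structure does not enlarge the class of $\Sigma_1$-definable relations, so that the admissibility of $\alpha$ transfers verbatim. Recall that $\langle L_\alpha,\mathcal B\rangle$ is admissible precisely when every $\Sigma_1(L_\alpha,\mathcal B)$-definable function $f$ satisfies replacement, $f[K]\in L_\alpha$ for all $K\in L_\alpha$; and since $\alpha$ is admissible, $L_\alpha$ itself has this property for $\Sigma_1(L_\alpha)$-definable functions. Hence it suffices to prove the two inclusions of definability classes $\Sigma_1(L_\alpha,B^+)\subseteq\Sigma_1(L_\alpha)$ (for the first item) and, when $B\in\Delta_1(L_\alpha)$, $\Sigma_1(L_\alpha,B)\subseteq\Sigma_1(L_\alpha)$ (for the second); megaregularity then follows immediately from the admissibility of $\alpha$.

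For the first item I would argue by substitution into formulas. Fix a $\Sigma_1$ formula in which the predicate symbol for $B$ occurs only positively; write it in the standard form $\exists\,\vec y\,\phi$ where $\phi$ is a bounded (i.e.\ $\Delta_0$) formula of the expanded language, and put $\phi$ in negation normal form, so that $B$ appears in $\phi$ only in \emph{positive} literals $t\in B$. Let $\theta_B(v)$ be a $\Sigma_1(L_\alpha)$ formula defining $B$, and replace every occurrence of $t\in B$ by $\theta_B(t)$. The resulting formula no longer mentions $B$ and is assembled from $\Sigma_1(L_\alpha)$ formulas (the substituted copies of $\theta_B$ together with the $\Delta_0$ set-theoretic pieces) using $\land$, $\lor$, bounded $\exists$, bounded $\forall$, and unbounded $\exists$. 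As $\alpha$ is admissible, $\Sigma_1(L_\alpha)$ is closed under all of these operations — in particular under bounded universal quantification, which is an equivalent form of $\Sigma_1$-collection — so the substituted formula is $\Sigma_1(L_\alpha)$ and defines the same relation. Thus every $\Sigma_1(L_\alpha,B^+)$-definable function is $\Sigma_1(L_\alpha)$-definable, and replacement for it follows from admissibility of $\alpha$; hence $B^+$ is megaregular.

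For the second item, $B\in\Delta_1(L_\alpha)$ gives $\Sigma_1(L_\alpha)$ definitions $\theta_B$ of $B$ and $\theta_{\overline B}$ of $\overline B$. Given an arbitrary $\Sigma_1(L_\alpha,B)$ formula, pass again to the form $\exists\,\vec y\,\phi$ with $\phi$ bounded and in negation normal form, so that $B$ occurs in $\phi$ only in literals $t\in B$ and $t\notin B$; rewrite each $t\notin B$ as the positive literal $t\in\overline B$. Now substitute $\theta_B(t)$ for $t\in B$ and $\theta_{\overline B}(t)$ for $t\in\overline B$, and repeat the closure argument of the previous paragraph. This eliminates $B$ entirely, producing an equivalent $\Sigma_1(L_\alpha)$ formula, so $\Sigma_1(L_\alpha,B)\subseteq\Sigma_1(L_\alpha)$ and $B$ is megaregular.

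The argument involves no deep obstacle; the point requiring care is the bookkeeping around polarities. One must normalize negations first so that the $\Sigma_1$ definition of $B$ — and, in the $\Delta_1$ case, also that of $\overline B$ — is plugged in only at occurrences of the correct sign, and one must keep in mind that the output stays $\Sigma_1$ only because of closure under bounded universal quantification, i.e.\ because $\alpha$ is admissible. This is also exactly where the hypotheses cannot be dropped: a set that is merely $\Sigma_1$ but used with both polarities, or that is not $\Delta_1$, can genuinely fail to be megaregular, so the role of admissibility of $\alpha$ in the bounded-$\forall$ step is essential rather than cosmetic.
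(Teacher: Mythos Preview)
Your argument is correct and is exactly the standard substitution-and-closure proof one gives for this fact; the paper itself states the result as a \emph{Fact} with no proof, so there is nothing to compare against. The only point worth emphasizing is the one you already flag: closure of $\Sigma_1(L_\alpha)$ under bounded universal quantification is precisely where admissibility of $\alpha$ is used, and without it the substitution of a $\Sigma_1$ definition inside a $\Delta_0$ matrix would not in general yield a $\Sigma_1$ formula.
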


\begin{prop}\label{prop_alpha_fin_iff_bounded_and_alpha_comp_with_oracle}
Let $\mathcal{B} \in \{B, B^-, B^+\}$ be megaregular and let $A \subseteq \alpha$. Then:
$A \in L_\alpha$ iff $A \in \Delta_1(L_\alpha, \mathcal{B})$ and $A$ is bounded by some $\beta < \alpha$.
\end{prop}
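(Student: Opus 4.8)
The plan is to prove the two implications separately; the forward direction is routine, so the substance lies in the converse.

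For ($\Rightarrow$), I would argue: if $A \in L_\alpha$ then $A$ lies in some $L_\gamma$ with $\gamma < \alpha$, so $\bigcup A = \{x : \exists y \in A\,(x \in y)\}$ is $\Delta_0$-definable over the transitive set $L_\gamma$ with parameter $A$, whence $\bigcup A \in \mathrm{Def}(L_\gamma) = L_{\gamma+1} \subseteq L_\alpha$; being an ordinal in $L_\alpha$ it is $< \alpha$, so $A$ is bounded by $\beta := (\bigcup A)+1 < \alpha$ ($\alpha$ being a limit ordinal). Moreover $A \in L_\alpha$ is an admissible parameter, so the $\Delta_0(L_\alpha)$ formulas ``$x \in A$'' and ``$x \notin A$'' define $A$ and $\overline A$; thus $A \in \Delta_1(L_\alpha) \subseteq \Delta_1(L_\alpha,\mathcal B)$.

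For ($\Leftarrow$), suppose $A \subseteq \beta$ with $\beta < \alpha$ and that both $A$ and $\overline A$ are $\Sigma_1(L_\alpha,\mathcal B)$, where $B$ occurs in the defining formulas only positively (resp.\ only negatively) when $\mathcal B \in \{B^+, B^-\}$. I would consider the \emph{characteristic function} $c:\beta \to \{0,1\}$, $c(\gamma)=1$ iff $\gamma \in A$. Writing $\gamma \in A \iff \exists z\,\phi_0(\gamma,z)$ and $\gamma \notin A \iff \exists z\,\psi_0(\gamma,z)$ with $\phi_0,\psi_0 \in \Delta_0(L_\alpha,\mathcal B)$, the relation ``$c(\gamma)=v$'' is equivalent to ``$(v=1 \wedge \exists z\,\phi_0(\gamma,z)) \vee (v=0 \wedge \exists z\,\psi_0(\gamma,z))$'', which is again $\Sigma_1(L_\alpha,\mathcal B)$ and keeps the oracle at the same polarity; hence $c$, and with it the function $F:\beta \to L_\alpha$, $F(\gamma):=\langle \gamma, c(\gamma)\rangle$, is a $\Sigma_1(L_\alpha,\mathcal B)$-definable total function. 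Since $\mathrm{dom}(F)=\beta \in L_\alpha$ and $\mathcal B$ is megaregular, the replacement axiom for $\langle L_\alpha, \mathcal B\rangle$ gives $F[\beta] \in L_\alpha$; but $F[\beta]$ is exactly the graph of $c$. Picking $\xi < \alpha$ with $\beta, F[\beta] \in L_\xi$, the set $A = \{\gamma \in \beta : \langle \gamma, 1\rangle \in F[\beta]\}$ is first-order definable over $L_\xi$ from the parameters $\beta$ and $F[\beta]$, so $A \in \mathrm{Def}(L_\xi)=L_{\xi+1}\subseteq L_\alpha$ by \cref{defn_goedels_constructible_universe}; that is, $A$ is $\alpha$-finite.

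The main obstacle is not a deep idea but the bookkeeping in ($\Leftarrow$): one must verify that forming $c$ and then $F$ stays inside $\Sigma_1(L_\alpha,\mathcal B)$ simultaneously for all three choices of $\mathcal B$ --- in particular that no negation over $B$ is introduced in the $B^+$ and $B^-$ cases --- so that megaregularity, i.e.\ $\Sigma_1(L_\alpha,\mathcal B)$-replacement, genuinely applies to $F$ on the $\alpha$-finite domain $\beta$. The boundedness hypothesis is exactly what makes that domain $\alpha$-finite, and it is essential: a merely $\Delta_1(L_\alpha,\mathcal B)$ unbounded set has no $\alpha$-finite index set along which to apply replacement. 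A more computational alternative --- using replacement to bound the ordinal stage by which each $\gamma<\beta$ gets decided by the two $\Sigma_1$ definitions, then reading $A$ off a bounded approximation --- reaches the same conclusion, but forces one to also extract $A$ from a bounded fragment of the oracle; the graph-of-$c$ formulation sidesteps that complication.
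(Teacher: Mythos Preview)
Your proof is correct and follows essentially the same route as the paper: build a total $\Sigma_1(L_\alpha,\mathcal B)$ function on the $\alpha$-finite domain $\beta$ and invoke megaregularity (replacement) to place its image in $L_\alpha$. The paper's only simplification is its choice of function --- it takes $f(x)=x$ if $x\in A\cap\beta$ and $f(x)=a$ otherwise (for a fixed $a\in A$), so that $f[\beta]=A$ outright and your extraction step via $L_\xi$ is unnecessary.
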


\begin{proof}
$\implies$ direction is clear. For the other direction, assume that $A \in \Delta_1(L_\alpha, \mathcal{B})$ and $A \subseteq \beta < \alpha$ for some $\beta$.
WLOG let $A \not=\emptyset$ and let $a \in A$.
Define a function $f:\alpha \to \alpha$ by $f(x)=y :\iff x \not\in \beta \land y =a \lor x \in \beta \land [x \in A \land x=y \lor x \not\in A \land y=a]$.
Since  $A \in \Delta_1(L_\alpha, \mathcal{B})$, the function $f$ is $\Sigma_1(L_\alpha, \mathcal{B})$ definable.
By the megaregularity of $\mathcal{B}$, we have that $A=f[\beta] \in L_\alpha$ as required.
\end{proof}

\begin{cor}\label{hr_closure_and_deg_invariance}(Megaregularity closure and degree invariance)\\
i) If $A \le_{\alpha e} B$ and $B^+$ megaregular, then $A^+$ megaregular.\\
ii) If $A \equiv_{\alpha e} B$, then $[A^+$ megaregular iff $B^+$ megaregular $]$.\\
iii) If $A \le_{\alpha} B$ and $B$ megaregular, then $A$ megaregular.\\
iv) If $A \equiv_{\alpha} B$, then $[A$ megaregular iff $B$ megaregular $]$.
\qed
\end{cor}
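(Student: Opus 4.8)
The plan is to reduce (ii)--(iv) to part (i), and to prove (i) by pushing $\Sigma_1$-definitions through an $\alpha$-enumeration operator. Recall that, by definition, $A^+$ is megaregular iff every $\Sigma_1(L_\alpha, A^+)$-definable function obeys replacement, where a $\Sigma_1(L_\alpha, A^+)$ formula is a $\Sigma_1(L_\alpha, A)$ formula in which the predicate $A$ occurs only positively. So for (i) it is enough to show: if $A \le_{\alpha e} B$, then every $\Sigma_1(L_\alpha, A^+)$ relation is also $\Sigma_1(L_\alpha, B^+)$; the replacement property then transfers from $B^+$ to $A^+$, since a $\Sigma_1(L_\alpha, A^+)$-definable function then has a $\Sigma_1(L_\alpha, B^+)$ graph.

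For (i), fix $W$ with $A = W(B)$ as in \cref{defn_ae_reducibility}, and let $s\colon \alpha \to \alpha$ be the ($\alpha$-computable) map sending $z$ to an index of the singleton $\{z\}$. Then \cref{defn_ae_reducibility} gives the $B$-positive $\Sigma_1$ definition
\[
z \in A \iff \exists \delta < \alpha\,\bigl[\langle s(z), \delta\rangle \in W \ \land\ \forall y \in K_\delta\,(y \in B)\bigr],
\]
in which $B$ occurs only positively, inside the bounded quantifier $\forall y \in K_\delta$ with $K_\delta$ $\alpha$-finite. Now given any $\Sigma_1(L_\alpha, A^+)$ formula $\phi$, whose only atoms involving $A$ are positive occurrences $t \in A$, I substitute the displayed definition for each such occurrence. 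Since $\Sigma_1$ over the admissible $L_\alpha$ is closed under $\land,\lor,\exists$ and under bounded quantification (here one uses $\Sigma_1$-collection in $L_\alpha$ to absorb the new $\exists\delta$ beneath the $\Delta_0$-matrix of $\phi$), the result is a $\Sigma_1(L_\alpha, B^+)$ formula; and since $A = W(B)$ holds literally, it defines the same relation. This proves (i), and (ii) follows by applying (i) to $A \le_{\alpha e} B$ and to $B \le_{\alpha e} A$.

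For (iii) I first record the auxiliary equivalence: for every $C \subseteq \alpha$, $C$ is megaregular iff $(C \oplus \overline{C})^+$ is megaregular. This holds because the two associated classes of definable functions coincide — rewriting $t \in C$ as the positive atom $2t \in C \oplus \overline{C}$ and $t \notin C$ as the positive atom $2t+1 \in C \oplus \overline{C}$ turns a $\Sigma_1(L_\alpha, C)$ formula into a $\Sigma_1(L_\alpha, (C\oplus\overline{C})^+)$ one, and conversely $t \in C \oplus \overline{C}$ rewrites as $(\exists a\,[t = 2a \land a \in C]) \lor (\exists b\,[t = 2b+1 \land b \notin C])$, which is $\Sigma_1(L_\alpha, C)$. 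Granting this, $A \le_\alpha B$ unfolds to $A \oplus \overline{A} \le_{\alpha e} B \oplus \overline{B}$; if $B$ is megaregular then $(B \oplus \overline{B})^+$ is megaregular by the auxiliary fact, hence $(A \oplus \overline{A})^+$ is megaregular by (i), hence $A$ is megaregular again by the auxiliary fact. Finally (iv) follows by applying (iii) to $A \le_\alpha B$ and to $B \le_\alpha A$.

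The main obstacle is the substitution step in (i): one must verify carefully that replacing the positive occurrences of the predicate $A$ in an arbitrary $\Sigma_1(L_\alpha, A^+)$ formula by the $B$-positive $\Sigma_1$ definition above yields a genuine $\Sigma_1(L_\alpha, B^+)$ formula — that positivity of $B$ is preserved, and that the newly introduced existential quantifiers can be pulled past the bounded quantifiers of the original matrix, which is exactly where the admissibility of $\alpha$ is invoked. The remaining ingredients (the index map $s$, $\alpha$-finiteness of the $K_\delta$, and the computable-join bookkeeping in (iii)) are routine.
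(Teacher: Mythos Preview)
The paper states this corollary with only a \qed\ and no argument, so there is no proof in the paper to compare against; your write-up supplies exactly the kind of routine verification the author is gesturing at, and the overall strategy (push positive occurrences of $A$ through the $\alpha e$-operator to get a $\Sigma_1(L_\alpha,B^+)$ description, then transfer replacement) is the natural one.

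One small correction in (i): when you ``absorb the new $\exists\delta$ beneath the $\Delta_0$-matrix of $\phi$'' via collection, the matrix after substitution carries $B$ as a predicate, so what you actually need is $\Sigma_1(L_\alpha,B^+)$-collection, not merely $\Sigma_1$-collection in bare $L_\alpha$. That is precisely the hypothesis that $B^+$ is megaregular, so the step is licensed --- but your phrasing ``$\Sigma_1$-collection in $L_\alpha$'' / ``admissibility of $\alpha$'' understates what is being used. In other words, the megaregularity of $B^+$ is already doing work in establishing the inclusion $\Sigma_1(L_\alpha,A^+)\subseteq\Sigma_1(L_\alpha,B^+)$, not only in the final transfer of replacement. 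With that adjustment the argument for (i) is sound, and (ii)--(iv) follow from it exactly as you say; the auxiliary equivalence $C$ megaregular $\Leftrightarrow (C\oplus\overline{C})^+$ megaregular is correct, and in the converse direction note that the witnesses $a,b$ in your rewriting of $t\in C\oplus\overline{C}$ are bounded by $t$, so the substitution stays $\Delta_0(L_\alpha,C)$.
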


\begin{prop}\label{prop_correspondence_ae_wae_sigma_1_def}(Correspondence between the $\alpha$-enumeration reducibilities)\\
We have the following implication diagram:\\
\begin{center}
\begin{tikzcd}
A \le_{w\alpha e} B \arrow[rr, bend left, "\text{if $B^+$ megaregular}"] & &
A \le_{\alpha e} B \arrow[ll, bend left, "\text{always}"]
\end{tikzcd}
\end{center}
\qed
\end{prop}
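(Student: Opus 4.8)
The statement packages two implications; the plan handles them separately.

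For the always-valid arrow, $A\le_{\alpha e}B\Rightarrow A\le_{w\alpha e}B$, I would simply specialise an $\alpha$-enumeration operator to singletons. Let $W$ witness $A\le_{\alpha e}B$ as in \cref{defn_ae_reducibility}, let $c\colon\alpha\to\alpha$ be the $\Delta_1(L_\alpha)$ map with $K_{c(x)}=\{x\}$ coming from the canonical indexing (\cref{prop_bij_alpha_to_l_alpha}), and put $\Phi:=\{\langle x,\delta\rangle:\langle c(x),\delta\rangle\in W\}$. Then $\Phi\in\Sigma_1(L_\alpha)$, and for every $x<\alpha$ one has $x\in\Phi(B)\iff\exists\delta\,[\langle c(x),\delta\rangle\in W\wedge K_\delta\subseteq B]\iff K_{c(x)}\subseteq A\iff x\in A$; hence $\Phi(B)=A$. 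Nothing beyond the basic indexing machinery is used.

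For the conditional arrow, fix a weak $\alpha$-enumeration operator $\Phi$ with $\Phi(B)=A$ and assume $B^+$ is megaregular, so that $\langle L_\alpha,B^+\rangle$ is admissible. I would take as candidate $\alpha$-enumeration operator
\[
W:=\{\langle\gamma,\delta\rangle:K_\gamma\subseteq\Phi(K_\delta)\}.
\]
This is $\Sigma_1(L_\alpha)$: the clause $x\in\Phi(K_\delta)$ unfolds to $\exists\delta'\,[\langle x,\delta'\rangle\in\Phi\wedge K_{\delta'}\subseteq K_\delta]$, which is $\Sigma_1$ since the subset test on $\alpha$-finite sets is $\alpha$-computable, and $K_\gamma\subseteq\Phi(K_\delta)$ is the bounded universal closure of this over the $\alpha$-finite set $K_\gamma$, still $\Sigma_1$ by $\Sigma_1$-admissibility of $L_\alpha$. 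The ``$\Leftarrow$'' half of the reducibility equivalence is immediate from monotonicity (\cref{prop_enum_op_monotonicity}): $\langle\gamma,\delta\rangle\in W$ and $K_\delta\subseteq B$ give $K_\gamma\subseteq\Phi(K_\delta)\subseteq\Phi(B)=A$.

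The work is in ``$\Rightarrow$'': from $K_\gamma\subseteq A=\Phi(B)$ I must extract a single $\alpha$-finite $D\subseteq B$ with $K_\gamma\subseteq\Phi(D)$ — any $\delta$ with $K_\delta=D$ then puts $\langle\gamma,\delta\rangle\in W$ and has $K_\delta\subseteq B$. By the witness property (\cref{prop_enum_op_witness}) each $x\in K_\gamma$ has an $\alpha$-finite $D_x\subseteq B$ with $x\in\Phi(D_x)$; the relation ``$D\subseteq B\wedge x\in\Phi(D)$'' is $\Sigma_1(L_\alpha,B^+)$, since $B$ enters only through $D\subseteq B$ and only positively. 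Megaregularity of $B^+$ makes $\langle L_\alpha,B^+\rangle$ admissible, so $\Sigma_1(L_\alpha,B^+)$-collection over the $\alpha$-finite set $K_\gamma$ yields an $\alpha$-finite $z$ such that every $x\in K_\gamma$ has some $D\in z$ with $D\subseteq B$ and $x\in\Phi(D)$; I then want to merge, taking $D:=\bigcup\{D\in z:D\subseteq B\}$, a subset of $B$ that is $\alpha$-finite by \cref{prop_alpha_finite_union} and satisfies $K_\gamma\subseteq\Phi(D)$ by monotonicity. I expect the crux — the step where megaregularity is genuinely indispensable, and which pinpoints why $\le_{w\alpha e}$ and $\le_{\alpha e}$ diverge without it — to be verifying that the selected subfamily $\{D\in z:D\subseteq B\}$ is itself $\alpha$-finite; this is exactly the content supplied by admissibility of $\langle L_\alpha,B^+\rangle$. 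The mechanism is transparent in the special case $B$ $\alpha$-c.e.: there $B=\bigcup_{\beta<\alpha}B_\beta$ along an increasing $\alpha$-computable chain of $\alpha$-finite approximations, $\Phi(B)=\bigcup_{\beta<\alpha}\Phi(B_\beta)$, and a single use of $\Sigma_1(L_\alpha)$-collection over $K_\gamma$ collapses the stages used by the members of $K_\gamma$ to one $\beta^\ast<\alpha$ with $K_\gamma\subseteq\Phi(B_{\beta^\ast})$; megaregularity of $B^+$ is precisely what lets this staged-approximation argument run for arbitrary $B$, where a priori no $\alpha$-finite approximation of $B$ need exist (e.g.\ $L_\beta\cap B$ may fail to lie in $L_\alpha$).
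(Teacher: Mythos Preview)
The paper records this proposition without proof (only the trailing \qed\ is given), so there is nothing to compare your argument against; what you have written is a correct proof supplying exactly the details the paper omits. The singleton specialisation handles the ``always'' arrow, and for the conditional arrow your operator $W=\{\langle\gamma,\delta\rangle:K_\gamma\subseteq\Phi(K_\delta)\}$ together with one invocation of $\Sigma_1(L_\alpha,B^+)$-collection over $K_\gamma$ is the natural route.

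On the step you isolate as the crux --- that the subfamily $\{D\in z:D\subseteq B\}$ lies in $L_\alpha$ --- your instinct is right that this is where positive megaregularity is genuinely used, and it does go through once one reads ``$\langle L_\alpha,B^+\rangle$ is admissible'' in the full KP sense: the condition $D\subseteq B$, i.e.\ $\forall y\in D.\,y\in B$, is $\Delta_0$ in the expanded language and uses $B$ only positively, so $\Delta_0$-separation over the $\alpha$-finite set $z$ yields the subfamily as an element of $L_\alpha$, after which \cref{prop_alpha_finite_union} finishes exactly as you say. The paper's ``i.e.''\ clause in the definition of megaregularity highlights only replacement, but the preceding ``iff the structure $\langle L_\alpha,\mathcal{B}\rangle$ is admissible'' is what licenses the separation step; note that selecting a \emph{least} witness instead (to stay within replacement for a single-valued function) would require negating $K_{\delta'}\subseteq B$ and hence negative occurrences of $B$, so separation really is the clean tool here.
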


\section{Selman's theorem}\label{subsection_selman_theorem}
We generalize the theorem of Selman present in \cite{selman1971arithmetical}.

\begin{defn}(Odd enumeration and $\alpha$-finite part)
\begin{itemize}
\item Let $B \subseteq \alpha$.
The total function $f:\alpha \to \alpha$ is an \emph{odd enumeration} of $B$ iff $f[\{2\gamma + 1 < \alpha : \gamma < \alpha\}]=B$.
\item \emph{$B$ odd $\alpha$-finite part} is a function $\tau:[0,2s) \to \alpha$ for $s < \alpha$ st $\forall x < \alpha [2x+1 \in \mathrm{dom}(\tau) \implies \tau(2x+1) \in B]$.
\item Let $|\tau|$ denote the order type of $\mathrm{dom}(\tau)$, i.e. $|\tau|:=\mathrm{ot}(\mathrm{dom}(\tau))$.
\end{itemize}
\end{defn}

If $\mathrm{dom}(\tau)$ is an initial segment of $\alpha$, we have $|\tau|=\mathrm{dom}(\tau)$.
If $\tau:[0,2s) \to \alpha$ is a function, then $\mathrm{dom}(\tau)=2s$ and so $|\tau|=2s$.
If $\tau$ is a $B$ odd $\alpha$-finite part, then there is an odd enumeration $f:\alpha \to \alpha$ of $B$ st $\tau \subseteq f$.

\begin{lemma}\label{lemma_b_wae_from_odd_enum}
Let $f:\alpha \to \alpha$ be an odd enumeration of $B$.
Then $B \le_{w\alpha e} f$.
\end{lemma}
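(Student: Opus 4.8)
The plan is to write down an explicit weak $\alpha$-enumeration operator $\Phi$ witnessing $B \le_{w\alpha e} f$. Throughout I identify the function $f$ with its graph $\{\langle n, f(n)\rangle : n < \alpha\} \subseteq \alpha$, using the $\Sigma_1$-definable pairing bijection $p_2$ of \cref{prop_bij_to_n_fold_product}; this is the sense in which $f$ is an oracle set. Since $f$ is an odd enumeration of $B$, unwinding the definition gives, for every $x < \alpha$,
\[ x \in B \iff \exists \gamma < \alpha\ \big(\, f(2\gamma+1) = x \,\big) \iff \exists \gamma < \alpha\ \big(\, \langle 2\gamma+1, x\rangle \in f \,\big). \]

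First I would define
\[ \Phi := \big\{\, \langle x, \delta\rangle < \alpha \;:\; \exists \gamma < \alpha\ \big(\, K_\delta = \{\langle 2\gamma+1, x\rangle\} \,\big) \,\big\}, \]
i.e.\ the pair $\langle x,\delta\rangle$ is enrolled in $\Phi$ exactly when the $\alpha$-finite set indexed by $\delta$ is the singleton whose unique element codes a pair with odd first coordinate and second coordinate $x$. Next I would check that $\Phi \in \Sigma_1(L_\alpha)$: from $\delta$ one $\alpha$-computably recovers $K_\delta = b(\delta)$ (using the bijection of \cref{prop_bij_alpha_to_l_alpha}), tests whether it is a singleton $\{m\}$, $\alpha$-computably decodes $m = \langle a, c\rangle$ via $p_2^{-1}$, and tests whether $a$ is odd and $c = x$; all of these are $\Delta_1(L_\alpha)$ conditions, so $\Phi$ is in fact $\alpha$-computable, a fortiori $\Sigma_1(L_\alpha)$.

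Finally I would verify that $\Phi(f) = B$. Spelling out the definition of a weak $\alpha$-enumeration operator, $x \in \Phi(f)$ iff there is $\delta < \alpha$ with $\langle x, \delta\rangle \in \Phi$ and $K_\delta \subseteq f$; by the definition of $\Phi$ this holds iff there is $\gamma < \alpha$ with $\{\langle 2\gamma+1, x\rangle\} \subseteq f$, that is, iff $\exists \gamma < \alpha\ \langle 2\gamma+1, x\rangle \in f$, which by the displayed equivalence is exactly $x \in B$. Hence $\Phi(f) = B$, and therefore $B \le_{w\alpha e} f$.

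I do not expect a genuine obstacle here: the substance is simply the observation that membership of $x$ in $B$ is witnessed by a \emph{single} value of the oracle $f$, so a ``singleton-axiom'' operator suffices. The only thing requiring care is the bookkeeping of the codings — the graph of $f$, the pairing $p_2$, and the $\alpha$-finite indexing $b$ — and confirming that assembling and decoding them stays within $\Sigma_1(L_\alpha)$; notably none of this uses admissibility or megaregularity of $f$, which is consistent with the conclusion being only \emph{weak} $\alpha$-enumeration reducibility and needing no hypothesis on $f$. (If one instead wanted the non-weak reduction $B \le_{\alpha e} f$, one would have to produce, for a given $\alpha$-finite $K_\gamma \subseteq B$, a single $\alpha$-finite set of witnessing pairs $\{\langle 2\gamma_x+1,x\rangle : x \in K_\gamma\}$, and collecting the witnesses $\gamma_x$ uniformly is precisely where an admissibility/megaregularity assumption would have to enter.)
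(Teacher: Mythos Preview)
Your proof is correct and is essentially identical to the paper's: the paper also writes down the singleton-axiom operator $\Phi := \{\langle b, \delta \rangle : \exists \gamma < \alpha\,[\,2\gamma+1 < \alpha \land K_\delta = \{(2\gamma+1, b)\}\,]\}$ and stops there. Your version simply supplies the routine verifications (that $\Phi \in \Sigma_1(L_\alpha)$ and that $\Phi(f)=B$) that the paper leaves implicit.
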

\begin{proof}
We have $B \le_{w\alpha e} f$ via $\aeop{} := \{\langle b, \delta \rangle : \exists \gamma < \alpha [2\gamma+1 < \alpha \land K_\delta = \{(2\gamma+1, b)\}]\}$.
\end{proof}

\begin{lemma}\label{lemma_f_inv_a_wae_f_selman}
Assume that $A \le_{w\alpha e} f$. Then $f^{-1}[A] \le_{w\alpha e} f$.
\end{lemma}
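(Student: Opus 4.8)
The goal is: assuming $A \le_{w\alpha e} f$, show $f^{-1}[A] \le_{w\alpha e} f$. So I have a weak $\alpha$-enumeration operator $\Phi$ with $A = \Phi(f)$, and I want to enumerate $f^{-1}[A] = \{x < \alpha : f(x) \in A\}$ from (the graph of) $f$.

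Let me think. $f$ here is a total function $\alpha \to \alpha$, regarded as a set — its graph $\{\langle x, f(x)\rangle : x < \alpha\}$. Since $f$ is total, it's... well, is the graph of $f$ enumeration-equivalent to something total? The graph of a total function $f: \alpha \to \alpha$ — call it $G_f$. We have $\overline{G_f} = \{\langle x, y\rangle : f(x) \neq y\}$. And $G_f \le_{w\alpha e} G_f$ trivially, but is $\overline{G_f} \le_{w\alpha e} G_f$? We have $\langle x, y \rangle \in \overline{G_f} \iff \exists z [z \neq y \wedge \langle x, z\rangle \in G_f]$. That's a positive enumeration of $G_f$... wait, need $z \neq y$ which doesn't involve $G_f$. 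So yes! $\overline{G_f} \le_{w\alpha e} G_f$ via the operator $\{\langle \langle x,y\rangle, \delta\rangle : K_\delta = \{\langle x, z\rangle\} \text{ for some } z \neq y\}$. So $G_f$ is total (in the sense $G_f \oplus \overline{G_f} \equiv_{w\alpha e} G_f$, roughly — though weak $\alpha e$ isn't transitive, let me be careful).

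Actually, the cleaner approach: $f^{-1}[A] = \{x : \exists y [f(x) = y \wedge y \in A]\}$. Since $A \le_{w\alpha e} f$, being in $A$ is ($\alpha$-finitely) witnessed by positive information about $f$. And $f(x) = y$ is positive information about $f$ (one element of the graph). So I can compose: to enumerate $x$ into $f^{-1}[A]$, guess $y$, check $\langle x, y\rangle \in f$ (i.e., $K_\delta = \{\langle x, y\rangle\} \subseteq G_f$), and check $y \in A$, which unfolds to: there's $\delta'$ with $\langle y, \delta'\rangle \in \Phi$ and $K_{\delta'} \subseteq G_f$. So the new operator is
$$\Psi := \{\langle x, \delta''\rangle : \exists y, \delta' [\langle y, \delta'\rangle \in \Phi \wedge K_{\delta''} = K_{\delta'} \cup \{\langle x, y\rangle\}]\}.$$
One checks $\Psi \in \Sigma_1(L_\alpha)$ (using that $\Phi \in \Sigma_1$, the pairing/indexing propositions, and $\alpha$-finite unions) and that $\Psi(G_f) = f^{-1}[A]$: if $x \in f^{-1}[A]$ then $y := f(x) \in A$, so some $\langle y,\delta'\rangle \in \Phi$ with $K_{\delta'} \subseteq G_f$, and then $\langle x, \delta''\rangle \in \Psi$ with $K_{\delta''} = K_{\delta'} \cup \{\langle x,y\rangle\} \subseteq G_f$; conversely positivity gives the reverse. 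I expect no serious obstacle — the only mild care is the bookkeeping: making sure $\Psi$ is genuinely $\Sigma_1$ (finding the index $\delta''$ of $K_{\delta'} \cup \{\langle x,y\rangle\}$ computably from $\delta'$, $x$, $y$, which follows from Propositions~\ref{prop_bij_alpha_to_l_alpha}, \ref{prop_bij_to_n_fold_product}, \ref{prop_alpha_finite_union}) and keeping the codes for elements of $\alpha$ versus elements of $\alpha \times \alpha$ straight via the fixed bijections. If the intended reading is that $f$ is literally presented as a subset of $\alpha$ (its graph coded into $\alpha$ via $p_2$), the argument is identical after that translation.
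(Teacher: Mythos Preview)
Your proposal is correct and takes essentially the same approach as the paper: define the new operator by combining the $\alpha$-finite witness $K_{\delta'}$ for $y \in A$ with the single pair $\langle x, y\rangle$ witnessing $f(x)=y$, yielding $\{\langle x,\delta''\rangle : \exists y,\delta'\,[\langle y,\delta'\rangle \in \Phi \wedge K_{\delta''}=K_{\delta'}\cup\{\langle x,y\rangle\}]\}$. Your write-up in fact supplies more verification (both directions of $\Psi(G_f)=f^{-1}[A]$ and the $\Sigma_1$ bookkeeping) than the paper's one-line proof; the initial digression on totality of $G_f$ is unneeded but harmless.
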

\begin{proof}
Note $f^{-1}[A]:=\{x < \alpha : f(x) \in A\}$.
Let $A \le_{w\alpha e} f$ via $\Psi \in \Sigma_1(L_\alpha)$.
Then $f^{-1}[A] \le_{w\alpha e} f$ via $\Phi := \{ \langle x, \delta \rangle : \exists y < \alpha [K_\delta = K_\epsilon \cup \{(x,y)\} \} \land \langle y, \epsilon \rangle \in \Psi]\} \in \Sigma_1(L_\alpha)$.
\end{proof}

\begin{defn}(Weak halting set)\\
The \emph{weak halting set} is defined as $K(A):=\{x < \alpha : x \in \Phi_x(A)\}$.
\end{defn}

\begin{prop}\label{prop_selman}\footnote{Generalized from \cite{soskova2010turingreducibilityandenumerationreducibility} for $\alpha=\omega$.}
Let $A, B \subseteq \alpha$ and $A \not\le_{w\alpha e} B$.
Assume that $A \oplus B \oplus K(\emptyset)$ is megaregular.
Then there exists an odd enumeration $f:\alpha \to \alpha$ of $B$ st $A \not\le_{w\alpha e} f$ and $B \le_{\alpha e}  f$.
\end{prop}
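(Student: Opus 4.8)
The plan is to build $f$ by a finite-injury-free forcing/construction in stages, extending a growing $B$ odd $\alpha$-finite part $\tau_\gamma$ over $\gamma<\alpha$, so that at even positions we diagonalize against all enumeration operators witnessing $A\le_{w\alpha e}f$, while at odd positions we list all of $B$ (to guarantee $f$ is an odd enumeration of $B$, whence $B\le_{w\alpha e}f$ by \cref{lemma_b_wae_from_odd_enum}, and in fact $B\le_{\alpha e}f$ since $B^+$ is megaregular by hypothesis so \cref{prop_correspondence_ae_wae_sigma_1_def} upgrades the reduction). The odd positions are the bookkeeping part: enumerate $B$ using some fixed $B^+$ and at stage $\gamma$ put the next element of $B$ at the next odd slot. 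The even positions carry the requirements $R_e$: ``$\Phi_e(f)\neq A$'' for each $e<\alpha$.

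The heart is meeting $R_e$. Suppose we have built $\tau$ (a $B$ odd $\alpha$-finite part with $|\tau|=2s$) and want to extend it to some $\tau'$ with an even slot set so that $\Phi_e(\tau')\neq A$ is forced. Ask the key question: is there an extension $\tau^*\supseteq\tau$ that is a $B$ odd $\alpha$-finite part and some $x<\alpha$ with $x\in\Phi_e(\tau^*)\setminus A$? If yes, take such $\tau^*$; then by monotonicity (\cref{prop_enum_op_monotonicity}) $x\in\Phi_e(f)$ for the final $f\supseteq\tau^*$, but $x\notin A$, so $R_e$ is met. If no, then I claim $A\le_{w\alpha e}B$, contradicting the hypothesis: the point is that for every $x\in A$ we must have some $B$ odd $\alpha$-finite-part extension $\tau^*\supseteq\tau$ with $x\in\Phi_e(\tau^*)$ — otherwise the "yes" case would trigger for the complement or we would separately meet $R_e$ — and conversely if some such $\tau^*$ exists then by the "no" case $x\in A$. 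Unwinding, membership of $x$ in $A$ becomes: ``$\exists$ a finite function $\sigma$ with $\sigma\subseteq$ some $B$-correct extension of $\tau$ and $x\in\Phi_e(\tau\cup\sigma)$'', and since the odd values of $\sigma$ just need to lie in $B$ and the even values are unconstrained, this is a $\Sigma_1(L_\alpha)$ condition with $B$ occurring only positively, i.e.\ a weak $\alpha$-enumeration reduction $A\le_{w\alpha e}B$. This is the step I expect to be the main obstacle — getting the Selman question phrased so that the negative answer genuinely yields a $\Sigma_1(L_\alpha, B^+)$ definition of $A$, and making sure the case split (there is an $x\in\Phi_e(\tau^*)\setminus A$ vs.\ not) is handled uniformly rather than requiring a sub-case for forcing $x\notin\Phi_e(f)$.

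The remaining work is to run these two kinds of actions along $\alpha$ and verify that $f=\bigcup_{\gamma<\alpha}\tau_\gamma$ is a genuine total odd enumeration of $B$. Totality and that $f$ is onto the odd-indexed values giving exactly $B$ are immediate from the odd-position bookkeeping; the even positions are filled by the $R_e$-actions and by default values elsewhere. The one place admissibility/megaregularity is used beyond the upgrade of $B\le_{w\alpha e}f$ to $B\le_{\alpha e}f$ is in bounding the construction: each stage $\gamma$ must leave $\tau_\gamma$ $\alpha$-finite, i.e.\ $|\tau_\gamma|<\alpha$, and the sequence $\langle\tau_\gamma\rangle_{\gamma<\alpha}$ must be $\Sigma_1(L_\alpha, A\oplus B\oplus K(\emptyset))$ with $\alpha$-finite initial segments. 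The oracle $A\oplus B$ answers the Selman question (it is a $\Sigma_1$ question relative to $B^+$ about whether a suitable $\tau^*$ exists, decided against $A$), and $K(\emptyset)$ handles the indices $e$ uniformly; megaregularity of $A\oplus B\oplus K(\emptyset)$ then guarantees that the length function $\gamma\mapsto|\tau_\gamma|$ maps $\alpha$-finite sets to $\alpha$-finite sets, so each $\tau_\gamma\in L_\alpha$ and the construction does not escape $L_\alpha$ at limit stages. Finally, for each $e<\alpha$ the requirement $R_e$ is acted on at some stage $<\alpha$, so $\Phi_e(f)\neq A$ for all $e$, giving $A\not\le_{w\alpha e}f$.
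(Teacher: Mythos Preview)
Your overall strategy---a stage construction with bookkeeping of $B$ at odd slots, diagonalization at stages indexed by $e$, and megaregularity of $A\oplus B\oplus K(\emptyset)$ to keep each $\tau_\gamma$ $\alpha$-finite at limit stages---is exactly what the paper does. Two points deserve comment.

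First, the paper does not diagonalize directly against $A=\Phi_e(f)$ as you do, but against $f^{-1}[A]=\Phi_e(f)$ (this suffices by \cref{lemma_f_inv_a_wae_f_selman}). At stage $s=2e+1$ it sets
\[
C:=\{x<\alpha:\exists\rho\supseteq\tau_s\ [\rho\text{ is a }B\text{ odd }\alpha\text{-finite part},\ x=\rho(|\tau_s|),\ |\tau_s|\in\Phi_e(\rho)]\},
\]
notes $C\le_{w\alpha e}B$, hence $C\neq A$, and splits symmetrically on whether a witness lies in $C\setminus A$ or in $A\setminus C$. This sidesteps precisely the obstacle you flagged: your ``no'' case does \emph{not} yield $A\le_{w\alpha e}B$ outright; it only gives $D:=\{x:\exists\tau^*\supseteq\tau_s\ [\tau^*\text{ a }B\text{ odd }\alpha\text{-finite part},\ x\in\Phi_e(\tau^*)]\}\subseteq A$ with $D\le_{w\alpha e}B$, and you still need the sub-case $x\in A\setminus D$ in which $R_e$ is met passively. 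Your argument is salvageable in exactly the paper's way: since $D\le_{w\alpha e}B$ one has $D\neq A$, and the two possibilities $D\setminus A\neq\emptyset$ and $A\setminus D\neq\emptyset$ are your ``yes'' case and your ``separately meet $R_e$'' case respectively. So the two routes are equivalent; the paper's framing via $C$ (or equivalently, via the target $f^{-1}[A]$) just makes the case split explicit and symmetric from the outset, and records the chosen witness $x$ as the value $\tau_{s+1}(|\tau_s|)$.

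Second, your upgrade of $B\le_{w\alpha e}f$ to $B\le_{\alpha e}f$ is misattributed: \cref{prop_correspondence_ae_wae_sigma_1_def} requires positive megaregularity of the set on the \emph{right} of the reduction, i.e.\ of $f$, not of $B^+$. The paper obtains this by observing that the construction places $f\in\Delta_1(L_\alpha,A\oplus B\oplus K(\emptyset))$, and then the assumed megaregularity of $A\oplus B\oplus K(\emptyset)$ transfers to $f$ (cf.\ \cref{hr_closure_and_deg_invariance}); only then does the upgrade go through. You already invoke this oracle for the limit-stage argument, so the fix is just to cite it here as well.
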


\begin{proof}

\subsubsection{Construction}
Note that $B \not= \emptyset$ and $B \not\equiv_{\alpha e} \emptyset$ as $A \not\le_{w\alpha e} B$.
We construct a sequence of $B$ odd $\alpha$-finite parts in $\alpha$ many stages st:
\begin{center}$\tau_0 \subseteq \tau_1 \subseteq ... \subseteq \tau_s \subseteq ...$\end{center}
In the end, the desired function $f:\alpha \to \alpha$ is defined as $f = \bigcup_{s < \alpha} \tau_s$.

Let $\tau_0 = \emptyset$. If $s$ is a limit ordinal, then let $\tau_s := \bigcup_{r < s} \tau_r$. Now assume that $\tau_s$ has been constructed, then at the stage $s$ construct $\tau_{s+1}$:
\begin{itemize}
\item Stage $s=2e$:\\
Set $\tau_{s+1} := \tau_s \cdot 0 \cdot b$ where $b = \mu y \{ y < \alpha: y \in B \land y \not\in \tau_s[\{2\gamma + 1 < |\tau_s| : \gamma < \alpha \}]\}$ and $0 \cdot b$ is the concatenation of $0$ and $b$. E.g. if $\tau = a \cdot b$, then $\tau(0)=a, \tau(1)=b$.
\item Stage $s=2e+1$:\\
Use $e$ and $\tau_s$ to define set $C$ as
\begin{center}$C := \{x < \alpha | \exists \rho \supseteq \tau_s [\rho$ is a $B$ odd $\alpha$-finite part and $x = \rho(|\tau_s|) \land |\tau_s| \in \aeop{e}(\rho)]\}$.\end{center}
As $C \le_{w\alpha e} B$, so $C \not= A$. Thus we have two cases:

\begin{itemize}
\item Case $\exists x < \alpha [x \in C \land x \not\in A]$: Then let $\tau_{s+1}$ be the minimal $\rho$ from $C$.
Note $\tau_{s+1}=\tau_s \cdot x \cdot b \cdot \sigma$ for some $b \in B$ where $\sigma$ is a $B$ odd $\alpha$-finite part.
\item Case $\exists x < \alpha [x \not\in C \land x \in A]$: Then let $\tau_{s+1} := \tau_s \cdot x \cdot b$ for some $b \in B$.
\end{itemize}
\end{itemize}

\subsubsection{Verification}
By the two cases above we have for all $e, s, x < \alpha$:
\begin{instat}\label{stat_x_xor_membership_selman_prop}
s=2e+1 \land x = \tau_{s+1}(|\tau_s|) \implies x \in C \land x \not\in A \lor x \not\in C \land x \in A
\end{instat}

Note that $f = \bigcup_{s < \alpha} \tau_s$ is an odd enumeration of $B$.
This is ensured by stages $s=2e < \alpha$.
If $b \in B$, then a pair $(2\gamma+1, b)$ is added to $f$ for some $2\gamma + 1 < \alpha$ at the stage $s=2b < \alpha$ at latest.

Moreover, $A \not\le_{w\alpha e} f$. For suppose not, then $A \le_{w\alpha e} f$.
Hence $f^{-1}[A] \le_{w \alpha e} f$ by \cref{lemma_f_inv_a_wae_f_selman} and so there is $e < \alpha$ st $f^{-1}[A]=\aeop{e}(f)$ and thus:
\begin{instat}\label{stat_eq_selman_prop}\forall l < \alpha [f(l) \in A \iff l \in f^{-1}[A] \iff l \in \aeop{e}(f)]\end{instat}

Consider the stage $s = 2e+1$.
Let $l = |\tau_s|$. Note $\tau_s \subseteq f$.
\begin{itemize}
\item Case 1: $l \in f^{-1}[A] \implies f(l) \in A \implies l \in \aeop{e}(f)$ using \cref{stat_eq_selman_prop}.
By the witness property of an $\alpha$-enumeration operator there exists $B$ odd $\alpha$-finite part $\rho$ of $f$ st $\rho \supseteq \tau_s \land l \in \aeop{e}(\rho) \land \rho(l)=f(l)$. So $f(l) \in C$.
Hence $f(l) \in C \cap A$.
Also by \cref{stat_x_xor_membership_selman_prop} we have $f(l) \in C \land f(l) \not\in A \lor x \not\in C \land f(l) \in A$.
This is a contradiction.
\item Case 2: $l \not\in f^{-1}[A] \implies f(l) \not\in A \implies l \not\in \aeop{e}(f)$ using \cref{stat_eq_selman_prop}.
By the monotonicity of an $\alpha$-enumeration operator there is no $B$ odd $\alpha$-finite part $\rho$ of $f$ st 
$\rho \supseteq \tau_s  \land l \in \aeop{e}(\rho)$.
So $f(l) \not\in C$.
Hence $f(l) \not\in A \land f(l) \not\in C$.
By \cref{stat_x_xor_membership_selman_prop} we have $f(l) \in C \land f(l) \not\in A \lor x \not\in C \land f(l) \in A$.
This is a contradiction.
\end{itemize}
Hence in any case $A \not\le_{w\alpha e} f$ as required.

Let $g : \alpha \to \alpha$ be defined by $g : s \mapsto \gamma$ where $K_\gamma = \tau_s$.
During the construction we use the oracle $A \oplus B \oplus K(\emptyset)$, hence $g \in \Sigma_1(L_\alpha, A \oplus B \oplus K(\emptyset))$.
We show that $g$ is well-defined and that $\tau_s$ is $\alpha$-finite at a limit stage $s$.
Let $s < \alpha$ be a limit stage. Then $g[s]=I \in L_\alpha$ since $s \in L_\alpha$, $g_{|s} \in \Sigma_1(L_\alpha, A \oplus B \oplus K(\emptyset))$ and by the megaregularity of the oracle $A \oplus B \oplus K(\emptyset))$.
Hence $\tau_s = \bigcup_{r < s} \tau_r = \bigcup_{\gamma \in I} K_\gamma$ is $\alpha$-finite as required.
Therefore $\forall s < \alpha. \tau_s \in L_\alpha$ as needed.

Note that $f \in \Delta_1(L_\alpha, A \oplus B \oplus K(\emptyset))$ since the construction of $f$ uses the oracle $A \oplus B \oplus K(\emptyset)$.
By that and the megaregularity of $A \oplus B \oplus K(\emptyset)$, also $f$ must by megaregular.
By \cref{lemma_b_wae_from_odd_enum} we have $B \le_{w\alpha e} f$.
By the megaregularity of $f$, we have $B \le_{\alpha e} f$ as required.
\end{proof}

\begin{thm}\label{thm_selman}(Selman's theorem for admissible ordinals)\\
Let $A, B \subseteq \alpha$ and let $A \oplus B \oplus K(U)$ be megaregular. Then:\\
$A \le_{\alpha e} B \iff \forall X [X \equiv_{\alpha e} X \oplus \overline{X} \land B \le_{\alpha e} X \oplus \overline{X} \implies A \le_{\alpha e} X \oplus \overline{X} ]$
\end{thm}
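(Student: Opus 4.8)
The plan is to treat the two implications separately. The forward implication is immediate: if $A \le_{\alpha e} B$ and $X$ is total with $B \le_{\alpha e} X \oplus \overline{X}$, then transitivity of $\le_{\alpha e}$ gives $A \le_{\alpha e} X \oplus \overline{X}$, so the right-hand side holds.

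For the reverse implication I would argue by contraposition: assuming $A \not\le_{\alpha e} B$, I would build a total set $X$ with $B \le_{\alpha e} X \oplus \overline{X}$ but $A \not\le_{\alpha e} X \oplus \overline{X}$, which refutes the right-hand side. The first step is to pass to the weak reducibility: since $B \le_{\alpha e} A \oplus B \oplus K(\emptyset)$ and the oracle $A \oplus B \oplus K(\emptyset)$ is megaregular (so in particular its positive part is megaregular), \cref{hr_closure_and_deg_invariance} yields that $B^+$ is megaregular, and then \cref{prop_correspondence_ae_wae_sigma_1_def} shows that $\le_{\alpha e}$ and $\le_{w\alpha e}$ to $B$ coincide; hence $A \not\le_{w\alpha e} B$. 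Now \cref{prop_selman} applies and produces an odd enumeration $f : \alpha \to \alpha$ of $B$ with $A \not\le_{w\alpha e} f$ and $B \le_{\alpha e} f$; since the construction there uses only the oracle $A \oplus B \oplus K(\emptyset)$ we also have $f \in \Delta_1(L_\alpha, A \oplus B \oplus K(\emptyset))$, so $f$ is megaregular by \cref{hr_closure_and_deg_invariance}.

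I would then take $X$ to be the graph of $f$, coded as a subset of $\alpha$ through the pairing bijection of \cref{prop_bij_to_n_fold_product}, and check three things. (i) \emph{$X$ is total}: as $f$ is a single-valued total function, $\langle x, y\rangle \in \overline{X}$ iff $\exists y' \neq y\,[\langle x, y'\rangle \in X]$, and this yields an $\alpha$-enumeration operator witnessing $\overline{X} \le_{\alpha e} X$, where the only substantive point is that the $\alpha$-finite sets of witnesses exist — this is replacement for $f$ restricted to an $\alpha$-finite set, hence megaregularity of $f$. Thus $X \oplus \overline{X} \le_{\alpha e} X$ and $X \equiv_{\alpha e} X \oplus \overline{X}$. (ii) \emph{$B \le_{\alpha e} X \oplus \overline{X}$}: immediate, since $B \le_{\alpha e} f$, and under the identification of $f$ with its graph $X$ this reads $B \le_{\alpha e} X \le_{\alpha e} X \oplus \overline{X}$. (iii) \emph{$A \not\le_{\alpha e} X \oplus \overline{X}$}: if $A \le_{\alpha e} X \oplus \overline{X}$ then, using $X \oplus \overline{X} \equiv_{\alpha e} X \equiv_{\alpha e} f$, we get $A \le_{\alpha e} f$; but $f$ megaregular makes $f^+$ megaregular, so \cref{prop_correspondence_ae_wae_sigma_1_def} forces $A \le_{w\alpha e} f$, contradicting the choice of $f$. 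Hence this $X$ refutes the right-hand side, completing the contrapositive.

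I expect the real difficulty — constructing an odd enumeration that diagonalizes against every weak $\alpha$-enumeration operator while keeping all approximations $\alpha$-finite — to be already handled by \cref{prop_selman}; the remaining work for this theorem is bookkeeping with megaregularity that must nevertheless be done carefully: that plain megaregularity descends to the positive part, that $B^+$ and $f^+$ inherit megaregularity from the hypotheses above, and that the reduction $\overline{X} \le_{\alpha e} X$ for the graph of a megaregular total function is carried out uniformly on $\alpha$-finite approximations rather than pointwise. I would also reconcile the oracle appearing in the statement, $K(U)$, with the $K(\emptyset)$ demanded by \cref{prop_selman}.
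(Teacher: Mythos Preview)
Your proposal is correct and follows essentially the same route as the paper: both directions proceed exactly as you describe, with the backward implication handled by contraposition via \cref{prop_selman}, taking the (graph of the) function $f$ as the total witness $X$. Your version is in fact more explicit than the paper's---where the paper simply asserts ``$f$ is total and so $A \le_{\alpha e} f$,'' you carefully verify totality of the graph and the passage between $\le_{\alpha e}$ and $\le_{w\alpha e}$ via megaregularity, and you rightly flag the $K(U)$ versus $K(\emptyset)$ discrepancy, which is indeed a typo in the statement.
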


Part of the following proof is adapted from the classical case present in \cite{soskova2010turingreducibilityandenumerationreducibility}.

\begin{proof}
$\Rightarrow$ direction is by the transitivity of $\le_{\alpha e}$.

For the $\Leftarrow$ direction assume that $\forall X [X \equiv_{\alpha e} X \oplus \overline{X} \land B \le_{\alpha e} X \oplus \overline{X} \implies A \le_{\alpha e} X \oplus \overline{X}]$.
We want to show that $A \le_{\alpha e} B$.
Assume not, then $A \not\le_{\alpha e} B$ and so $A \not\le_{w \alpha e} B$ as $A \oplus B \oplus K(U)$ is megaregular.
Then by \cref{prop_selman} and the megaregularity of $A \oplus B \oplus K(U)$ there exists a total function $f$ st $A \not\le_{w\alpha e} f$, but $B \le_{\alpha e} f$.
But $f$ is total and so $A \le_{\alpha e} f$ which is a contradiction to the statement $A \not\le_{w\alpha e} f$.
Hence $A \le_{\alpha e} B$ as required.
\end{proof}

\begin{cor}\label{thm_selman_for_alpha_regular_card}(Selman's theorem for regular cardinals)\\
Let $\alpha$ be a regular cardinal. Then for any $A, B \subseteq \alpha$ we have:\\
$A \le_{\alpha e} B \iff \forall X [X \equiv_{\alpha e} X \oplus \overline{X} \land B \le_{\alpha e} X \oplus \overline{X} \implies A \le_{\alpha e} X \oplus \overline{X} ]$
\end{cor}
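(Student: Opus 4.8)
The statement to prove is \cref{thm_selman_for_alpha_regular_card}: when $\alpha$ is a regular cardinal, Selman's characterization holds for all $A,B\subseteq\alpha$ without any extra hypothesis. Since \cref{thm_selman} already gives the full biconditional under the hypothesis that $A\oplus B\oplus K(U)$ is megaregular, the only work is to discharge that hypothesis — that is, to show that when $\alpha$ is a regular cardinal, \emph{every} subset of $\alpha$ (in particular $A\oplus B\oplus K(U)$, or whatever set $K(U)$ denotes here — presumably $K(\emptyset)$, the weak halting set relative to the empty oracle) is megaregular.

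First I would reduce to a single claim: \emph{if $\alpha$ is a regular cardinal, then every $C\subseteq\alpha$ is $\alpha$-megaregular}, equivalently $\langle L_\alpha, C\rangle$ is admissible for every $C$. Granting this, $A\oplus B\oplus K(U)$ is megaregular, so \cref{thm_selman} applies verbatim and yields the corollary. So the mathematical content is entirely the regularity-implies-universal-megaregularity fact. To prove it, I would unwind the definition of megaregularity: I must show that for any $\Sigma_1(L_\alpha,C)$-definable function $f$ and any $K\in L_\alpha$, the image $f[K]\in L_\alpha$. Now $K\in L_\alpha$ is an $\alpha$-finite set, hence (since $\alpha$ is a cardinal) has cardinality $<\alpha$; say $|K|=\kappa<\alpha$. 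The function $f$ restricted to $K$ is a $\Sigma_1(L_\alpha,C)$-definable function with domain of size $\kappa<\alpha$, so $f[K]$ is a subset of $\alpha$ of cardinality $\le\kappa<\alpha$. Because $\alpha$ is \emph{regular}, any such set of size $<\alpha$ is bounded below $\alpha$: $f[K]\subseteq\beta$ for some $\beta<\alpha$. Then I would invoke \cref{prop_alpha_fin_iff_bounded_and_alpha_comp_with_oracle} — but that proposition presupposes megaregularity of the oracle, which is what we are trying to establish, so I cannot use it directly. Instead I would argue from first principles: a bounded subset of $\alpha$ of cardinality $<\alpha$, even one defined with an arbitrary oracle, lies in $L_\alpha$; this is because for a cardinal $\alpha$, $L_\alpha$ contains all subsets of $\beta$ for $\beta<\alpha$ that are ``small enough'', and more robustly, $\mathcal{P}(\beta)\cap L_\alpha$ already contains every subset of $\beta$ coded by $\alpha$-finite information. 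The cleanest route: the graph of $f\restriction K$ is a $\Sigma_1(L_\alpha,C)$ set whose domain is $\alpha$-finite; collect the witnesses. Actually the most self-contained argument is: $f[K]$ has order type $<\alpha$ and is a set of ordinals $<\alpha$; since $\alpha$ is a cardinal, any set of ordinals below $\alpha$ of order type $<\alpha$ is an element of $L_\alpha$ (it is constructed by stage $\alpha$ because it can be coded below $\alpha$).

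The step I expect to be the main obstacle is precisely justifying ``bounded subset of $\alpha$ of small cardinality $\Rightarrow$ member of $L_\alpha$'' \emph{without circularly invoking megaregularity}. One careful way is to observe that $\alpha$ being a cardinal means $L_\alpha=H_\alpha$ (the sets hereditarily of size $<\alpha$) in the relevant sense, so any set of ordinals of cardinality $<\alpha$ that is bounded below $\alpha$ — being hereditarily small — is in $L_\alpha$; combined with regularity giving the boundedness, this closes the argument. I would then remark that this recovers the well-known fact that for regular cardinals $\alpha$, every subset of $\alpha$ is both regular and hyperregular, hence megaregular, and cite \cite{chong1984techniques} or \cite{sacks1990higher} for the background on $H_\alpha=L_\alpha$ at cardinals. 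With universal megaregularity in hand, the corollary is immediate from \cref{thm_selman}, and I would write the proof as essentially one line: ``By the regularity of $\alpha$ every subset of $\alpha$ is megaregular; in particular $A\oplus B\oplus K(U)$ is megaregular, so \cref{thm_selman} applies.''
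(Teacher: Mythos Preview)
Your approach is exactly the paper's: observe that for a regular cardinal $\alpha$ every subset of $\alpha$ is megaregular, so in particular $A\oplus B\oplus K(U)$ is megaregular, and then invoke \cref{thm_selman}. The paper's own proof is literally those two sentences and does not justify the universal-megaregularity claim at all; your additional sketch (bound $f[K]$ using regularity, then use $L_\alpha=H_\alpha$ at cardinals to place the bounded image inside $L_\alpha$) supplies the missing background and is the standard argument.
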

\begin{proof}
If $\alpha$ is a regular cardinal, then every subset of $\alpha$ is megaregular.
Hence $A \oplus B \oplus K(U)$ is megaregular.
The remaining proof of the corollary follows from \cref{thm_selman}.
\end{proof}

\section{$\mathcal{TOT}_{\alpha e}$ as an automorphism base for $\mathcal{D}_{\alpha e}$}\label{section_aut_base}

\begin{defn}(Automorphism base)\\
TFAE:
\begin{itemize}
\item The subset $B \subseteq \mathrm{dom}(\mathcal{M})$ is an automorphism base of the model $\mathcal{M}$.
\item $\forall f, g \in \mathrm{Aut}(\mathcal{M}) [f_{|B} = g_{|B} \implies f = g]$
\item $\forall f \in \mathrm{Aut}(\mathcal{M}) [f_{|B} = 1_{|B} \implies f = 1]$
\end{itemize}
\end{defn}

\begin{thm}\label{thm_automorphism_base}
Assume $\alpha$ is a regular cardinal.
The total degrees $\mathcal{TOT}_{\alpha e}$ are an automorphism base for $\mathcal{D}_{\alpha e}$.
\end{thm}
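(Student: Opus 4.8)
The plan is to obtain \cref{thm_automorphism_base} as a short corollary of Selman's theorem for regular cardinals (\cref{thm_selman_for_alpha_regular_card}), following the classical template by which one shows the Turing degrees form an automorphism base of the enumeration degrees. The first step is to restate \cref{thm_selman_for_alpha_regular_card} in purely order-theoretic terms. For an $\alpha$-enumeration degree $\mathbf{x}$ put $T(\mathbf{x}) := \{\mathbf{t} \in \mathcal{TOT}_{\alpha e} : \mathbf{x} \le \mathbf{t}\}$. Since a set $X$ satisfies $X \equiv_{\alpha e} X \oplus \overline{X}$ exactly when it is total, and since $\aedeg{X \oplus \overline{X}}$ ranges over all of $\mathcal{TOT}_{\alpha e}$ as $X$ ranges over $\powerset{\alpha}$, \cref{thm_selman_for_alpha_regular_card} says precisely that for all $A, B \subseteq \alpha$,
\[
\aedeg{A} \le \aedeg{B} \iff T(\aedeg{B}) \subseteq T(\aedeg{A}).
\]
In particular $T(\mathbf{a}) = T(\mathbf{b}) \implies \mathbf{a} = \mathbf{b}$: an $\alpha$-enumeration degree is determined by the collection of total degrees lying above it.

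Now let $\pi \in \mathrm{Aut}(\mathcal{D}_{\alpha e})$ be an automorphism with $\pi_{|\mathcal{TOT}_{\alpha e}} = 1_{|\mathcal{TOT}_{\alpha e}}$; by the third characterisation of an automorphism base it suffices to prove $\pi = 1$. Fix an arbitrary $\mathbf{a} \in \mathcal{D}_{\alpha e}$. For every $\mathbf{t} \in \mathcal{TOT}_{\alpha e}$ we have, using that $\pi$ is an order isomorphism and that $\pi(\mathbf{t}) = \mathbf{t} = \pi^{-1}(\mathbf{t})$,
\[
\mathbf{a} \le \mathbf{t} \iff \pi(\mathbf{a}) \le \pi(\mathbf{t}) = \mathbf{t},
\]
so $T(\mathbf{a}) = T(\pi(\mathbf{a}))$. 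By the reformulation of Selman above this forces $\mathbf{a} = \pi(\mathbf{a})$, and as $\mathbf{a}$ was arbitrary, $\pi = 1$.

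I do not expect any genuine obstacle beyond \cref{thm_selman_for_alpha_regular_card} itself, whose proof via \cref{prop_selman} is where the real work lies — and for a regular cardinal $\alpha$ the megaregularity hypothesis of \cref{thm_selman} is automatic, since every subset of $\alpha$ is megaregular. The only points needing a line of care are: verifying that the quantifier ``$\forall X[X \equiv_{\alpha e} X \oplus \overline{X} \land \cdots]$'' in \cref{thm_selman_for_alpha_regular_card} exactly captures the total degrees (so that it translates into a statement about $T(\cdot)$), and noting that $\pi$ fixing $\mathcal{TOT}_{\alpha e}$ pointwise makes $\pi^{-1}$ do likewise, which validates both directions of the displayed equivalence $\mathbf{a} \le \mathbf{t} \iff \pi(\mathbf{a}) \le \mathbf{t}$.
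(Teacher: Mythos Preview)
Your proposal is correct and follows essentially the same route as the paper: both restate \cref{thm_selman_for_alpha_regular_card} as ``an $\alpha e$-degree is determined by the total degrees above it'' and then observe that an automorphism fixing $\mathcal{TOT}_{\alpha e}$ pointwise preserves this set of total upper bounds, forcing it to be the identity. The only cosmetic difference is your introduction of the notation $T(\mathbf{x})$, which the paper leaves implicit in its numbered-line derivation.
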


\begin{proof}
0. Assume $\alpha$ is a regular cardinal.

1. $\forall a, b \in \mathcal{D}_{\alpha e}[ a \le b \iff \forall x \in \mathcal{TOT}_{\alpha e} (b \le x \implies a \le x)]$ by $0$ and \cref{thm_selman_for_alpha_regular_card}.

2. $\forall a,b \in \mathcal{D}_{\alpha e} [a = b \iff \forall x \in \mathcal{TOT}_{\alpha e} (b \le x \iff a \le x)]$ by 1.

3. Assume $f \in \mathrm{Aut}(\mathcal{D}_{\alpha e})$.

4. $\forall a, b \in \mathcal{D}_{\alpha e} [ a \le b \iff f(a) \le f(b)]$ by 3.

5. Assume $\forall x \in \mathcal{TOT}_{\alpha e}. f(x) = x$.

6. Assume $y \in \mathcal{D}_{\alpha e}$.

7. $\forall x \in \mathcal{TOT}_{\alpha e}. (f(y) \le f(x) \iff y \le x)$ by 4.

8. $\forall x \in \mathcal{TOT}_{\alpha e}. (f(y) \le x \iff y \le x)$ by 5, 7.

9. $f(y)=y$ by 2, 8.

10. $\forall y \in \mathcal{D}_{\alpha e}. f(y) = y$ by 6, 9.

11. $\mathcal{TOT}_{\alpha e}$ is an automorphism base for $\mathcal{D}_{\alpha e}$ by 3, 5, 10.
\end{proof}

\section{Acknowledgements}
The author would like to thank Mariya Soskova for the explanation of the proof in the classical case, i.e. $\alpha = \omega$.

The author was supported by Hausdorff Research Institute for Mathematics during Hausdorff Trimester Program \emph{Types, Sets and Constructions}.

\bibliographystyle{plain}
\bibliography{References/references} 
\addcontentsline{toc}{chapter}{References} 

\end{document}